\documentclass[a4paper,11pt]{amsart}

\usepackage[utf8]{inputenc}
\usepackage{amsmath,amsfonts,amsthm}
\usepackage{lmodern}
\usepackage[T1]{fontenc}
\usepackage{xspace}
\usepackage[abbrev]{amsrefs}
\usepackage{tikz}
\usepackage{bm}

\newcommand{\Barany}{B\'ar\'any\xspace}

\DeclareMathOperator{\PPos}{Pos} 
\DeclareMathOperator{\Conv}{Conv} 
\DeclareMathOperator{\Vol}{Vol} 
\DeclareMathOperator{\vol}{vol} 
\DeclareMathOperator{\PProb}{P} 
\DeclareMathOperator{\EE}{E} 
\DeclareMathOperator{\Int}{Int} 
\DeclareMathOperator{\dist}{dist} 

\newcommand{\indicator}[1]{\mathbf{1}_{\{#1\}}} 
\newcommand{\dotp}[2]{#1\cdot #2}
\newcommand{\kk}{\mathbf k} 
\newcommand{\mm}{\mathbf m} 
\newcommand{\nn}{\mathbf n} 
\newcommand{\PPrim}{\mathbb P^d} 
\newcommand{\RR}{\mathbb R} 
\newcommand{\BB}{\mathbb S}

\newcommand{\F}{\mathcal F}
\newcommand{\PP}{\mathcal P}
\newcommand{\C}{\mathcal C}
\newcommand{\T}{\mathcal T} 
\newcommand{\uu}{\mathbf u} 
\newcommand{\vv}{\mathbf v} 
\newcommand{\ww}{\mathbf w} 
\newcommand{\zz}{\mathbf z} 

\newcommand{\xx}{\mathbf x} 
\newcommand{\XX}{\mathbf X} 
\newcommand{\ZZ}{\mathbb Z} 
\newcommand{\ee}{\mathbf e}
\newcommand{\ta}{\mathbf t}
\newcommand{\al}{\alpha}
\newcommand{\la}{\lambda}
\newcommand{\eps}{\varepsilon}

\renewcommand{\aa}{\mathbf a} 
\renewcommand{\epsilon}{\varepsilon} 

\newtheorem{theorem}{Theorem}[section]
\newtheorem{corollary}[theorem]{Corollary}
\newtheorem{lemma}[theorem]{Lemma}
\newtheorem{proposition}[theorem]{Proposition}
\numberwithin{equation}{section}

\author{Imre B\'ar\'any}

\address{Alfr\'ed R\'enyi Institute of Mathematics,
Hungarian Academy of Sciences, 13-15 Re\'altanoda Street
1053 Budapest, Hungary, and
Department of Mathematics,
University College London,
Gower Street, London WC1E 6BT, UK}

\email{barany.imre@renyi.mta.hu}

\author{Julien Bureaux}
\address{Universit\'e Paris Ouest Nanterre La D\'efense,
200 avenue de la R\'epublique,
92000 Nanterre,
France}
\email{julien.bureaux@math.cnrs.fr}

\author{Ben Lund}
\address{
 Department of Mathematics,
Boyd Graduate Studies Research Center,
University of Georgia,
Athens, GA~30602, USA
}
\email{lund.ben@gmail.com}

\begin{document}

\title{Convex cones, integral zonotopes, limit shape}
\subjclass[2010]{Primary 52B20; Secondary 60C05, 05A17}
\keywords{convex cones, integral zonotopes, integer partitions, limit shape}

\begin{abstract} Given a convex cone $C$ in $\RR^d$, an integral zonotope
$T$ is the sum of segments $[0,\vv_i]$ ($i=1,\ldots,m$) where each $\vv_i\in
C$ is a vector with integer coordinates. The endpoint of $T$ is
$\kk=\sum_1^m \vv_i$. Let $\T(C,\kk)$ be the family of all integral zonotopes in $C$ whose endpoint
is $\kk \in C$. We prove that, for large $\kk$, the zonotopes in
$\T(C,\kk)$ have a limit shape, meaning that, after suitable scaling, the
overwhelming majority of the zonotopes in $\T(C,\kk)$ are very close to a
fixed convex set. We also establish several  combinatorial properties of a
typical zonotope in $\T(C,\kk)$.
\end{abstract}

\maketitle

\section{Introduction and main results}

This paper is about convex cones $C$ in $\RR^d$, integral zonotopes
contained in $C$, and their limit shape.  The cone $C$ is going to
be closed, convex and pointed (that is no line lies in $C$) and its
interior, $\Int C$, is non-empty.  We write $\C$ or $\C^d$ for the set
of these cones.

A convex (lattice) polytope $T \subset C$ is an integral zonotope if
there exists $m \in \mathbb N$ and $\vv_1,\dots,\vv_m \in \ZZ^d \cap C$
(that is, each $\vv_i$ is lattice point in $C$) such that
\begin{align*}
  T&=\left\{\sum_{i=1}^m \al_i\vv_i \mid (\alpha_1,\dots,\alpha_m) \in [0,1]^m\right\}\\
  &=\Conv \left \{\sum_{i=1}^m \epsilon_i\vv_i \mid (\epsilon_1,\dots,\epsilon_m) \in \{0,1\}^m\right\},
\end{align*}

The multiset $V=\{\vv_1,\ldots,\vv_m\}\subset \ZZ^d$ determines $T=T(V)$
uniquely, of course, but not conversely.  More about this later. The
endpoint of $T$ is just $\sum_{i=1}^m\vv_i$.  Define $\T(C,\kk)$ as
the family of all integral zonotopes in $C$ whose endpoint is $\kk \in
\ZZ^d\cap \Int C$. Clearly, $\T(C,\kk)$ is a finite set. Let $p(C,\kk)$
denote its cardinality.

The main result of this paper is that, for large $\kk$, the overwhelming majority of the elements of $\T(C,\kk)$ are very close to a fixed convex set $T_0=T_0(C,\kk)$ which is actually a zonoid. We write $\dist(A,B)$ for the Hausdorff distance of the sets $A,B \subset \RR^d$. Here comes our main result.

\begin{theorem}\label{th:lshape} Given $C \in \C^d$ ($d\ge 2$) and $\kk \in \Int C$ there is a convex set $T_0=T_0(C,\kk)$ such that for every $\epsilon>0$,
\[
\lim_{n \to \infty}\frac{\operatorname{card}\left\{T \in \T(C,n\kk) \mid \dist (\frac 1n T,T_0)>\epsilon\right\}}{p(C,n\kk)}=0.
\]
\end{theorem}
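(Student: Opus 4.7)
The plan is to employ a grand canonical ensemble argument in the spirit of Vershik's approach to limit shapes for integer partitions and analogous results for convex lattice polygons.

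First, parametrize. Two multisets $V,V'$ give the same zonotope $T(V)=T(V')$ if and only if, in every primitive lattice direction $\uu$ in $C$, the aggregate scalar weight agrees. Writing $\Orth$ for the set of primitive lattice vectors in $C$, this puts $\T(C,\kk)$ in bijection with the finitely supported functions $c:\Orth\to\ZZ_{\ge 0}$ satisfying $\sum_{\uu\in\Orth} c(\uu)\,\uu=\kk$, the associated zonotope being $T=\sum_\uu [0,c(\uu)\uu]$.

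For each $\ta\in\Int C^*$, introduce the Gibbs measure $\PProb_\ta$ on such functions $c$ under which the $c(\uu)$ are independent with $\PProb_\ta(c(\uu)=k)\propto e^{-k\langle\ta,\uu\rangle}$. The partition function factors as $\prod_{\uu\in\Orth}(1-e^{-\langle\ta,\uu\rangle})^{-1}$, and, crucially, the uniform measure on $\T(C,\kk)$ equals $\PProb_\ta$ conditioned on $\sum_\uu c(\uu)\uu=\kk$, for every admissible $\ta$. Choose $\ta=\ta_n$ so that the expected endpoint under $\PProb_{\ta_n}$ equals $n\kk$; strict convexity of the log partition function guarantees unique existence, and a Riemann-sum analysis of $\sum_\uu \uu/(e^{\langle\ta,\uu\rangle}-1)$ gives $\ta_n\sim n^{-1/(d+1)}\tau(\kk)$ for a vector $\tau(\kk)\in\Int C^*$ determined by $\kk$. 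The mean-endpoint computation identifies the candidate limit shape as the zonoid
\[
T_0=\int_C\frac{[0,\uu]}{e^{\langle\tau,\uu\rangle}-1}\,d\uu,
\]
i.e., the convex body with support function $h_0(\vv)=\int_C (\vv\cdot\uu)_+/(e^{\langle\tau,\uu\rangle}-1)\,d\uu$.

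The theorem then reduces to two standard probabilistic steps. (i) Under $\PProb_{\ta_n}$, the support function $h_T(\vv)=\sum_\uu c(\uu)(\vv\cdot\uu)_+$ is a sum of independent nonnegative variables with readily bounded variance; Chebyshev combined with a Riemann-sum identification of its mean gives
\[
\PProb_{\ta_n}\!\bigl(|n^{-1}h_T(\vv)-h_0(\vv)|>\epsilon\bigr)\to 0
\]
at an inverse polynomial rate for each fixed $\vv\in\BB^{d-1}$. (ii) To transfer this to the conditioned (uniform) measure, apply a local central limit theorem for the lattice-valued sum $\sum_\uu c(\uu)\uu$ at its mean $n\kk$: this shows that $\PProb_{\ta_n}(\sum_\uu c(\uu)\uu=n\kk)$ is only polynomially small in $n$, which is absorbed by the gain in (i).

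The main obstacle is the local central limit theorem in step (ii). The endpoint is a $\ZZ^d$-valued sum of independent but highly non-identically distributed geometric vectors whose parameters drift to zero at rate $n^{-1/(d+1)}$. One must establish a sharp local expansion at the mean with enough uniformity so that conditioning preserves the concentration in (i); this requires careful bounds on the characteristic function of $\sum_\uu c(\uu)\uu$ away from the origin, exploiting that the directions in $\Orth$ span $\RR^d$ robustly as $\ta_n\to 0$. Once (i)--(ii) are available, a standard $\epsilon$-net argument on $\BB^{d-1}$ together with the identity $\dist(A,B)=\sup_{\vv\in\BB^{d-1}}|h_A(\vv)-h_B(\vv)|$ for convex bodies upgrades pointwise concentration of the scaled support function to Hausdorff convergence and concludes the proof.
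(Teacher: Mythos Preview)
Your overall architecture matches the paper's: parametrize zonotopes by multiplicity functions on primitive directions, put a Gibbs (grand canonical) measure making the multiplicities independent geometrics, calibrate the inverse temperature so that the expected endpoint is $n\kk$, prove concentration of $n^{-1}h_T(\vv)$ about $h_{T_0}(\vv)$, and transfer to the uniform law by dividing by $\PProb_{\ta_n}(\XX=n\kk)$. The gap is quantitative, in the balance between your steps (i) and (ii).

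Chebyshev in (i) only yields
\[
\PProb_{\ta_n}\bigl(|n^{-1}h_T(\vv)-h_0(\vv)|>\epsilon\bigr)=O\bigl(\epsilon^{-2}\,n^{-d/(d+1)}\bigr),
\]
since $\Var\bigl(n^{-1}h_T(\vv)\bigr)\asymp n^{-d/(d+1)}$. A sharp local CLT in (ii) gives
\[
\PProb_{\ta_n}(\XX=n\kk)\asymp (\det\Gamma_n)^{-1/2}\asymp n^{-d(d+2)/(2(d+1))}.
\]
The resulting bound on the conditional probability is of order $n^{d^2/(2(d+1))}\to\infty$: the polynomial gain from Chebyshev does \emph{not} absorb the polynomial loss from conditioning (already for $d=2$ the ratio is $n^{2/3}$). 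So as written, (ii) is not ``absorbed by the gain in (i).''

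The paper reverses where the effort goes. In (i) it uses an exponential-moment (Chernoff) bound---available because the $\omega(\xx)$ are independent geometrics and the Laplace transform of $h_T(\vv)$ factors---to obtain, uniformly in $\vv\in\BB^{d-1}$,
\[
\PProb_n\bigl(|n^{-1}h_T(\vv)-h_{T_0}(\vv)|>\epsilon\bigr)\le \exp\{-c\,n^{d/(d+1)}\}.
\]
Then (ii) only needs the \emph{weak} local estimate $\PProb_n(\XX=n\kk)=e^{-o(n^{d/(d+1)})}$, which the paper gets cheaply from the central limit theorem for $\XX$ together with the cone-order monotonicity $p(C,\mm)\le p(C,n\kk)$ whenever $n\kk-\mm\in C$; no sharp local CLT and no characteristic-function analysis away from the origin are needed. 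In other words, the step you flag as ``the main obstacle'' is in fact avoidable, while the step you treat as routine (Chebyshev) is precisely where a genuine upgrade is required. Replace Chebyshev by a Chernoff argument and weaken (ii) accordingly; your $\epsilon$-net step then goes through, and with the uniform exponential bound the union over the net is immediate.
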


This result has been known for $d=2$. Twenty years ago, \Barany{}~\cite{barany_limit_1995}, Sinai{}~\cite{sinai_probabilistic_1994} and Vershik~\cite{vershik_limit_1994} proved the existence of a limit shape for the set of all convex lattice polygons lying in the square $[-n,n]^2$ endowed with the uniform distribution. Although not all convex lattice polygons are (translates of) zonotopes, case $d=2$ of Theorem~\ref{th:lshape} follows directly from their result.
The approach of these papers relies on a natural link between convex lattice polygons on the first hand, and integer partitions on the other hand.

In addition to Theorem~\ref{th:lshape}, the asymptotic behaviour as $n\to \infty$ of $p(C,n\kk)$ can also be determined.

\begin{theorem}\label{th:size} Under the above conditions on $C$ and $\kk$ there is a number $q(C,\kk)>0$ such that, as $n$ tends to infinity,
\[
n^{-\frac d{d+1}}\log p(C,n\kk) \longrightarrow c_d\, q(C,\kk),
\]
where $c_d = \sqrt[d+1]{\frac{\zeta(d+1)}{\zeta(d)}(d+1)!}$ depends only on the dimension.
\end{theorem}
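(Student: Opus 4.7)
The plan is to adapt the Hardy--Ramanujan / Meinardus saddle-point analysis of integer partitions, generalized to the $d$-dimensional cone setting. The first observation is that every $T\in\T(C,\kk)$ admits a \emph{unique} presentation $T=\sum_\uu[0,a_\uu\uu]$ indexed over primitive lattice vectors $\uu\in C$ with finitely supported multiplicities $a_\uu\in\mathbb N_0$: indeed, $[0,k\vv]$ equals the $k$-fold Minkowski sum of $[0,\vv]$, so collinear generators can always be merged into the primitive direction. Writing $P$ for the set of primitive lattice vectors in $C\cap\ZZ^d$,
\[
p(C,\kk)=\#\Bigl\{a\colon P\to\mathbb N_0\text{ of finite support}\,:\,\sum_{\uu\in P}a_\uu\uu=\kk\Bigr\},\qquad F(\zz):=\sum_{\kk}p(C,\kk)\zz^\kk=\prod_{\uu\in P}\frac{1}{1-\zz^\uu}.
\]

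For $\mathbf s$ in the interior of the dual cone $C^\ast$, define $\Phi(\mathbf s):=\log F(e^{-\mathbf s})$. Nonnegativity of the coefficients of $F$ gives the Chernoff upper bound $\log p(C,n\kk)\le \Phi(\mathbf s)+\mathbf s\cdot n\kk$ for every admissible $\mathbf s$. A matching lower bound, up to an $O(\log n)$ term that is negligible at the scale $n^{d/(d+1)}$, follows from concentration of the tilted probability measure $\PProb_{\mathbf s}(T)\propto e^{-\mathbf s\cdot\text{(endpoint of }T)}$: this measure places probability $\ge n^{-O(1)}$ on the event $\{\text{endpoint}=n\kk\}$ when $\mathbf s=\mathbf s_n$ is the unique saddle with $-\nabla\Phi(\mathbf s_n)=n\kk$. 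Altogether,
\[
\log p(C,n\kk)=\Phi(\mathbf s_n)+\mathbf s_n\cdot n\kk+O(\log n).
\]

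The heart of the argument is the asymptotic, as $\mathbf s\to 0$ in $\Int C^\ast$,
\[
\Phi(\mathbf s)=\sum_{\uu\in P}\sum_{k\ge 1}\frac{e^{-k\mathbf s\cdot\uu}}{k}\ \sim\ \frac{\zeta(d+1)}{\zeta(d)}\,G(\mathbf s),\qquad G(\mathbf s):=\int_C e^{-\mathbf s\cdot\uu}\,d\uu.
\]
This combines (i) the density $1/\zeta(d)$ of primitive lattice vectors, converting the sum over $P$ into $(1/\zeta(d))$ times the integral over $C$, and (ii) the substitution $\uu\leftarrow k\uu$ (valid since $C$ is a cone), which contributes a factor $k^{-d}$ and leaves $\sum_{k\ge 1}k^{-(d+1)}=\zeta(d+1)$. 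Since $G$ is smooth, strictly convex, and homogeneous of degree $-d$ on $\Int C^\ast$, the saddle equation $(\zeta(d+1)/\zeta(d))\nabla G(\mathbf s_n)=-n\kk$ has a unique solution of the form $\mathbf s_n=n^{-1/(d+1)}\mathbf s_\ast(\kk)$. Euler's identity $\mathbf s\cdot\nabla G(\mathbf s)=-d\,G(\mathbf s)$ then yields
\[
\Phi(\mathbf s_n)+\mathbf s_n\cdot n\kk\ \sim\ (d+1)\,\frac{\zeta(d+1)}{\zeta(d)}\,G(\mathbf s_\ast)\,n^{d/(d+1)},
\]
and rescaling $\mathbf s_\ast$ by $[\zeta(d+1)/\zeta(d)]^{1/(d+1)}$ absorbs both $\zeta(d+1)/\zeta(d)$ and $(d+1)!$ into $c_d$, leaving $q(C,\kk)=\frac{d+1}{((d+1)!)^{1/(d+1)}}\,G(\mathbf s_{\ast\ast})$ with $\nabla G(\mathbf s_{\ast\ast})=-\kk$.

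The main technical obstacle is establishing the asymptotic for $\Phi$ with \emph{uniform} control over a neighbourhood of $\mathbf s_n$ of size $\sim n^{-1/(d+1)}$, not just pointwise at $\mathbf s_n$: both the Hessian bound needed to run the Gaussian integration and the local limit theorem behind the lower bound require it. Handling primitive lattice points calls for Möbius inversion together with an Euler--Maclaurin-type estimate for smooth integrals over $C$, with error controlled by the distance of $\mathbf s$ from $\partial C^\ast$; this is where the cone geometry enters decisively, since the approximation degenerates near the boundary. Once these uniform bounds are in hand, the remainder of the proof is convex analysis on the smooth function $G$.
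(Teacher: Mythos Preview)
Your outline is essentially the paper's argument: the product formula $Z(\mathbf s)=\prod_{\uu\in P}(1-e^{-\mathbf s\cdot\uu})^{-1}$, the asymptotic $\log Z(\mathbf s)\sim(\zeta(d+1)/\zeta(d))\int_C e^{-\mathbf s\cdot\xx}\,d\xx$ via the $1/\zeta(d)$ density of primitive points, and the decomposition $\log p(C,n\kk)=\log Z(\mathbf s_n)+\mathbf s_n\cdot n\kk+\log\PProb_{\mathbf s_n}[\XX=n\kk]$ all appear there verbatim. Two points of divergence are worth flagging.

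First, your assertion that $\PProb_{\mathbf s_n}[\XX=n\kk]\ge n^{-O(1)}$ is stronger than what the paper actually proves and stronger than the theorem needs. The paper establishes only the \emph{weak} local limit estimate $\PProb_n[\XX=n\kk]=e^{-o(n^{d/(d+1)})}$, and it does so not via a full local CLT but by combining the ordinary CLT with the monotonicity $p(C,\mm)\le p(C,\nn)$ whenever $\nn-\mm\in C$: one restricts to a region $A_n$ of lattice points $\mm$ with $n\kk-\mm\in C$ and $\uu\cdot(n\kk-\mm)\le\sigma_n$, on which $\PProb_n[\XX\in A_n]$ is bounded below by the CLT while every term is dominated by $e^{\beta_n\sigma_n}\PProb_n[\XX=n\kk]$. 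A genuine polynomial bound would require substantially more (a lattice local limit theorem with error control), so you should either downgrade your claim or be prepared to supply that extra argument.

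Second, rather than solving the saddle equation $\nabla G(\mathbf s_{\ast\ast})=-\kk$ abstractly, the paper fixes the saddle direction $\uu$ from the outset as the normal to the minimal-volume cap of $C$ containing $\kk$ (Gigena's theorem), and then verifies by a direct Fubini computation that $\Lambda(\uu)+\uu\cdot\kk=(d+1)!\,\Vol C(\uu\le 1)$. This is what turns your implicitly defined $q(C,\kk)$ into an explicit geometric quantity; your Euler-identity route gives the same number but leaves that identification to be done separately.
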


As we shall see in Section 3, $q(C,\kk)$ is a constant multiple of the $d+1$st root of the volume of the minimal cap of $C$ containing $\kk$.

The next section connects integral zonotopes and strict integer partitions. Section~\ref{sec:limiting_zonoid} is about the limiting zonoid and some examples. The basic probabilistic model and proofs of the main results are in Section~\ref{ssec:model}. Section~\ref{sec:number_vertices} establishes several  combinatorial properties of a typical zonotope $T$ in $\T(C,\kk)$, namely we estimate the number of $i$-dimensional faces of $T$, for $i=0,1,\ldots,d-1$.
Section~\ref{sec:convex_bodies} includes proofs of the existence of limit shapes for integral zonotopes chosen in arbitrary convex bodies in $\mathbb{R}^2$, and in hypercubes for higher dimensions.

\section{Strict integer partitions}
\label{sec:strict_partitions}

A multiset $V=\{\vv_1,\ldots,\vv_m\} \subset \ZZ^d\cap C$ determines the zonotope $T=T(V)$ uniquely but, as remarked earlier, $T$ does not determine $V$ uniquely. We are going to choose a suitable multiset $W \subset \ZZ^d \cap C$ uniquely. This is fairly simple. First let $\PPrim$ denote the primitive vectors in $\ZZ^d$; a vector $\zz=(z_1,\ldots,z_d) \in \ZZ^d$ is primitive if $\gcd(z_1,\ldots,z_d)=1$. Note that $0\notin \PPrim$. Given $T=T(V)$ with generators $V=\{\vv_1,\ldots,\vv_m\}\subset \ZZ^d\cap C$, there is a unique multiset $W=\{\ww_1,\ldots,\ww_{\ell}\}\subset \PPrim\cap C$ that generates the same zonotope, that is, $T(V)=T(W)$. Indeed, each $\vv_i$ can be written uniquely as $h\ww$ for some $\ww \in \PPrim$ and $h \in \ZZ_+$. Then put $h$ copies of $\ww$ in $W$. This way we get a multiset $W \subset \PPrim \cap C$ such that   $T=T(W)$. It is easy to check (we omit the details) that if $T=T(U)$ for some other multiset $U \subset \ZZ^d\cap C$, then the above construction gives the same $W$. That means that $W$ is uniquely determined by $T$.

We have just defined a one-to-one correspondence between lattice zonotopes $T$ lying in $C$ and multisets of primitive vectors $W \subset \PPrim \cap C$. If the endpoint of $T$ is $\kk$, then
\[
\sum_{i=1}^m \ww_i=\kk.
\]

Such a multiset is called a \emph{strict integer partition} of the vector $\kk \in \ZZ^d$ from the cone $C$. It can be alternatively described by the family $(\omega(\xx))_{\xx \in \PPrim}$ of multiplicities $\omega(\xx) = \operatorname{card}\left\{ j \in \{1,\dots,\ell\} \mid \ww_j = \xx\right\}$ of the available parts $\xx \in \PPrim \cap C$. Notice that for any partition, there is only a finite number of vectors $\xx \in \PPrim \cap C$ such that $\omega(\xx) \neq 0$. Therefore, picking a strict partition is actually equivalent to picking a function $\omega : \PPrim \cap C \to \ZZ_+$ with finite support. For any such function $\omega$ we define
\[
    \XX(\omega) := \sum_{\xx \in \PPrim \cap C} \omega(\xx) \,\xx.
\]
With this notation, the fact that $\omega$ describes a partition of $\kk$ corresponds to the condition $\XX(\omega) = \kk$.

\bigskip
One can consider non-strict partitions as well, that is, the uniform distribution on all multisets $V=\{\vv_1,\ldots,\vv_m\} \subset \ZZ^d\cap C$ with $\sum_1^m \vv_i=\kk$ (where $\vv_i\ne 0$), so the same zonotope may appear several times. The results of this paper remain valid in this case as well but some constants are different. For instance, Theorem~\ref{th:size} remains valid except that the constant $c_d$ is slightly different, namely, no division by $\zeta(d)$ is required. We omit the details.

\section{The limiting zonoid}
\label{sec:limiting_zonoid}

The dual $C^o$ of a cone $C \in \C^d$ is defined, as usual, via
\[
C^o=\{ \uu \in \RR^d \mid \forall \xx\ \in C \setminus \{0\},\; \uu \cdot\xx >0 \}.
\]
Note that the dual $C^o$ is an open cone, which is convenient for our purposes. Its closure is in $\C^d$ as one can see easily. Given $\uu\in C^o$ and $t>0$ we define the corresponding {\sl section} $C(\uu=t)$ and {\sl cap} $C(\uu\le t)$ of $C$ by
\begin{eqnarray*}
C(\uu=t)&=&\{\xx\in C \mid \uu\cdot\xx=t\},\\
C(\uu\le t)&=&\{\xx\in C \mid \uu\cdot\xx\le t\}.
\end{eqnarray*}

We are going to use the following result of Gigena~\cite{gigena_integral_1978}, see also \cite{faraut_analysis_1994}.

\begin{theorem}\label{th:gigena} Given $C \in \C^d$ and $\aa \in \Int C$ there is a unique $\uu=\uu(C,\aa)$ such that
\begin{itemize}
\item  $\aa$ is the center of gravity of the section $C(\uu=1)$,
\item  $C(\uu\le 1)$ has minimal volume among all caps of $C$ that contain $\aa$, moreover, $C(\uu\le 1)$ is the unique cap with this property.
\end{itemize}
\end{theorem}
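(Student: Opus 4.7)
The strategy is to recast the minimum-cap problem as strictly convex optimization on the open dual cone $C^o$, then read off the centroid characterization from a first-order condition. Define $V(\uu) := \vol(C(\uu \le 1))$ for $\uu \in C^o$. Because $C$ is a cone, $C(\uu \le t) = t\,C(\uu \le 1)$, so $V(\la\uu) = \la^{-d} V(\uu)$. Every cap has the form $C(\uu \le 1)$ for some $\uu \in C^o$, and contains $\aa$ iff $\uu \cdot \aa \le 1$; homogeneity forces the infimum under this constraint to be attained on the affine slice $S := \{\uu \in C^o : \uu \cdot \aa = 1\}$. So it suffices to show that $V|_S$ attains a unique minimum $\uu^\star$ and that $\uu^\star$ is the $\uu$ for which $\aa$ is the centroid of $C(\uu = 1)$.

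For the convex-analytic side, I would introduce the Laplace transform $L(\uu) := \int_C e^{-\uu\cdot\xx}\,d\xx$. Using $e^{-y} = \int_y^\infty e^{-s}\,ds$ together with $V(\uu/s) = s^d V(\uu)$ and layer-cake, one obtains the identity $L(\uu) = d!\,V(\uu)$. H\"older's inequality then gives
\[
 L((1-t)\uu + t\vv) \le L(\uu)^{1-t} L(\vv)^t,
\]
with equality only if $(\uu-\vv)\cdot\xx$ is constant on $C$; since $\Int C$ is nonempty, this forces $\uu = \vv$. Thus $V$ is strictly log-convex, and hence strictly convex, on $C^o$. If $\uu_n \to \uu_\infty \in \partial C^o$, then $\uu_\infty$ vanishes on some ray of $C$, and the caps $C(\uu_n \le 1)$ contain unbounded neighborhoods of this ray, driving $V(\uu_n) \to \infty$. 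Strict convexity combined with this coercivity yields a unique minimizer $\uu^\star \in S$.

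To identify $\uu^\star$ geometrically, I would differentiate $L$ under the integral and slice $C$ by the level sets $C(\uu = s) = s\,C(\uu = 1)$ to obtain
\[
 \int_C \xx\,e^{-\uu\cdot\xx}\,d\xx = d\,L(\uu)\,\bar\xx(\uu),
\]
where $\bar\xx(\uu)$ denotes the Euclidean centroid of the section $C(\uu=1)$. This yields $\nabla V(\uu) = -d\,V(\uu)\,\bar\xx(\uu)$. The Lagrange condition $\nabla V(\uu^\star) = \la\aa$ therefore forces $\bar\xx(\uu^\star)$ to be a scalar multiple of $\aa$, and since both points lie on the hyperplane $\uu^\star \cdot \xx = 1$, this scalar is $1$, giving $\bar\xx(\uu^\star) = \aa$.

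The step I expect to be most delicate is the moment identity $\nabla V(\uu) = -d\,V(\uu)\,\bar\xx(\uu)$: one must justify differentiation under the integral for $L$ uniformly on compact subsets of $C^o$, execute the level-set decomposition of the cone, and carefully track the $(d-1)$-dimensional Jacobians to connect an analytic gradient with the Euclidean centroid of the section. Once this identity is in hand, the strict log-convexity of the Laplace transform delivers existence, uniqueness, and the centroid characterization simultaneously, since the Lagrange equation then has at most one solution on $S$.
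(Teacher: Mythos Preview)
The paper does not give its own proof of Theorem~\ref{th:gigena}; it quotes the result from Gigena and from Faraut--Koranyi and uses it as a black box. The only gesture toward a proof is the remark at the end of Section~\ref{ssec:number_zonotopes}, which observes that the relevant $\uu$ is the unique minimizer of $\Lambda(\vv)+\vv\cdot\kk$ on $C^\circ$ and that ``this fact can be used to prove Gigena's Theorem''.

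Your proposal is correct and is, in effect, a fleshed-out version of that remark. You work with the equivalent constrained problem of minimising $V(\uu)=\tfrac{1}{d!}\Lambda(\uu)$ on the slice $\{\uu\cdot\aa=1\}$, obtain strict log-convexity of $\Lambda$ from H\"older, and read off the centroid condition from the Lagrange equation via the moment identity $\nabla V=-d\,V\,\bar\xx$. One small point worth tightening is the coercivity step: instead of arguing about ``unbounded neighbourhoods'' of a ray, note that since $\aa\in\Int C$ there is $r>0$ with $B(\aa,r)\subset C$, whence $\uu\cdot\aa\ge r\|\uu\|$ for every $\uu$ in the closure of $C^\circ$. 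Thus the slice $S$ is bounded, and you only need $V(\uu_n)\to\infty$ along sequences in $S$ converging to the boundary of $\overline{C^\circ}$; that follows immediately from $L=d!\,V$ and monotone convergence once some $\xx_0\in C\setminus\{0\}$ satisfies $\uu_\infty\cdot\xx_0=0$.
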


Clearly $\uu \in C^{\circ}$.  It follows that $\frac d{d+1}\aa$ is the center of gravity of the cap $C(\uu\le 1)$:
\[
\frac d{d+1}\aa= \frac 1{\Vol C(\uu\le 1)}\int_{C(\uu\le 1)} \xx\, d\xx.
\]
Further, there is a $\la >0$ such that
\[
\aa= \int_{C(\uu\le \la)} \xx\, d\xx.
\]
One can check directly that this $\la $ is unique and is given by
\[
\lambda= \left(\frac {d+1}d \frac 1{\Vol C(\uu\le 1)}\right)^{1/(d+1)}.
\]
Define
$Q=Q(C,\aa)=C(\uu\le \la)$ with this $\la$. An easy computation shows that
\begin{eqnarray*}
q(C,\aa)&:=&\Vol Q(C,\aa)=\la^d \Vol C(\uu\le 1)\\
&=& \left(\left(1+\frac 1d\right)^d\Vol C(\uu\le 1)\right)^{1/(d+1)}.
\end{eqnarray*}

We will show later in Section~\ref{ssec:number_zonotopes}, that this is the $q(C,\kk)$ appearing in Theorem~\ref{th:size}.

\medskip
{\bf Example 1.} Let $C=\RR^d_+=\PPos \{\ee_1,\ldots,\ee_d\}$ be the positive orthant of $\RR^d$ where the $\ee_i$ form the standard basis of $\RR^d$, and $\aa=(a_1,\ldots,a_d) \in \Int C$. Then the section $C(\uu=1)$ is the intersection of $C$ with the hyperplane passing through the points $da_i\ee_i$, ($i=1,\ldots ,d$), and $\Vol C(\uu \le 1)=\frac 1{d!}d^da_1\dots a_d$. Consequently
\[
q(C,\aa)=\left( \frac {(d+1)^d}{d!} a_1\ldots a_d\right)^{1/(d+1)}.
\]

\medskip
{\bf Example 2.} 
 Let $C$ be the circular cone in $\RR^3$ of equation $x^2 + y^2 \leq z^2$ with $z \geq 0$ and consider $\kk=(0,0,1)$.
 The minimal cap of $C$ containing $\kk$ is the one cut off by the plane $z=1$. Its volume is $\pi/3$, so in this case
 \[
 q(C,\kk)=\left(\frac {4^3\pi}{3^4}\right)^{1/4} =1.255294...
 \]

Next we explain what the limiting zonoid $T_0=T_0(C,\kk)$ from Theorem~\ref{th:lshape} is. Its support function is given by
\[
h_{T_0}(\vv)=\int \vv \cdot\xx\, d\xx
\]
where the integral is taken over $Q(C,\kk)\cap C(\vv  \ge 0)$. Similarly, the point $\ta(\vv)$ where the hyperplane orthogonal to $\vv$ supports $T_0(C,\kk)$ is $\int \xx d\xx$ with the integral taken over the same set as above. So the boundary point of $T_0$ with outer normal $\vv$ is given by
\begin{equation}\label{eq:argmax}
\ta(\vv)=\int_{Q(C,\kk)\cap C(\vv  \ge 0)} \xx \,d \xx.
\end{equation}
As expected, $\ta(\vv)=\kk$ for $\vv \in C^{\circ}$ and $\ta(\vv)=\mathbf 0$ for $\vv \in -C^{\circ}$. The proof of these facts follow from the proof of Propositions~\ref{thm:zono_shape} and \ref{thm:zono_shape2}.

{\bf Remark.} When computing $\ta(\vv)$ we can integrate over $C(\uu\le 1)\cap C(\vv\ge 0)$ instead of $Q(C,\kk)\cap C(\vv  \ge 0)$ and use a homothety (with centre the origin) so that $\ta(\vv)=\kk$ for $\vv \in C^{\circ}$.

\medskip
Thus for instance in {\bf Example 1} one can, in principle, determine the limiting zonotope. Here $C=\RR^d_+$ and we may choose $\kk$ to be the all one vector $\mathbf 1$ since the whole question is linearly invariant (or equivariant if you wish). Write $\triangle$ for the convex hull of the vectors $d\ee_i$, $i=1,\ldots,d$ and the origin. Clearly $\triangle=C(\uu\le 1)$. The boundary points of the limiting zonotope $T_0(\RR^d_+,\mathbf 1)$ are given by
\[
\int_{\triangle \cap C(\vv\ge 0)}\xx \,d\xx
\]
scaled properly as explained in the Remark above.

The computation is easy when $d=2$. Then $\vv=(t,-s)$ with $s,t\ge 0$ and $s+t=1$, say. Then $\triangle \cap C(\vv\ge 0)$ is a triangle with vertices $(0,0),(1,0),(s,t)$. The integral in question is equal to the area of the triangle (which is $t/2$) times its centre of gravity (which equals $(1+s,t)/3$). Using $s=1-t$ and suitable scaling we get $\ta(\vv)=(2t-t^2,t^2)$. With $x,y$ coordinates this is just $x+y=2\sqrt y$ (for $x\ge y$), a parabola arc. Thus the boundary of $T_0$ consists of two parabola arcs given by the equations $x+y=2\sqrt y$ (for $x\le y$) and  $x+y=2\sqrt y$ (for $x\ge y$), which is the same as the limit shape in $\RR^2$ of convex lattice polygons, see  \Barany{}~\cite{barany_limit_1995} and  Vershik~\cite{vershik_limit_1994}.

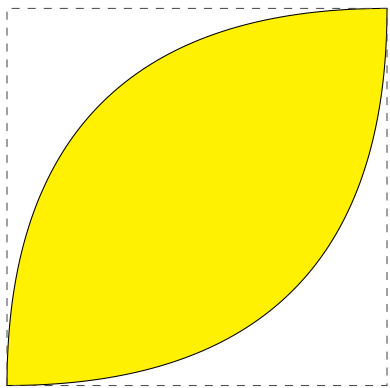
\begin{figure}[h]
  \begin{center}
    \begin{tikzpicture}[scale=5]
      \filldraw [fill=yellow] plot [samples=100, domain=0:1] (2*\x-\x^2, \x^2) -- plot [samples=100, domain=0:1] (1-2*\x+\x^2, 1-\x^2) -- cycle;
      \draw [color=black!70,dashed] (0,0) -- (0,1) -- (1,1) -- (1,0) -- (0,0);
    \end{tikzpicture}
    \caption{The limiting zonoid in dimension 2.}
  \end{center}
\end{figure}

The same method works in $\RR^3$. This time the previous triangle is replaced by the simplex with vertices $(0,0,0),(1,0,0),(s,t,0),(u,0,v)$ with $s,t,u,v\ge 0$ and $s+t=1$ $u+v=1$. The integral in question is the volume of this simplex ($tv/6$) times its centre of gravity ($(1+u+s,t,v)/4$). Using $s=1-t$ and $u=1-v$ again we get $\ta(\vv)=(3tv-t^2v-tv^2,t^2v,tv^2)$ . Thus the equation of the boundary of $T_0$ is $x+y+z=3\sqrt[3]{yz}$; this holds when $x-2y+z\ge 0$ and $x+y-2z \ge 0$, as one can check directly. $T_0$ is centrally symmetric with respect to center $(1/2,1/2,1/2)$. Its boundary is made up of six pieces that come in symmetric pairs. The piece in the region determined by inequalities  $x-2y+z\ge 0$ and $x+y-2z\ge 0$ is given by the equation $x+y+z=3 \sqrt[3]{yz}$. Other pieces are given by equations $x+y+z=3 \sqrt[3]{xz}$ and $x+y+z=3 \sqrt[3]{xy}$, and the reflections with respect to the center.

\begin{figure}[h]
\begin{center}
    \includegraphics[scale=0.5]{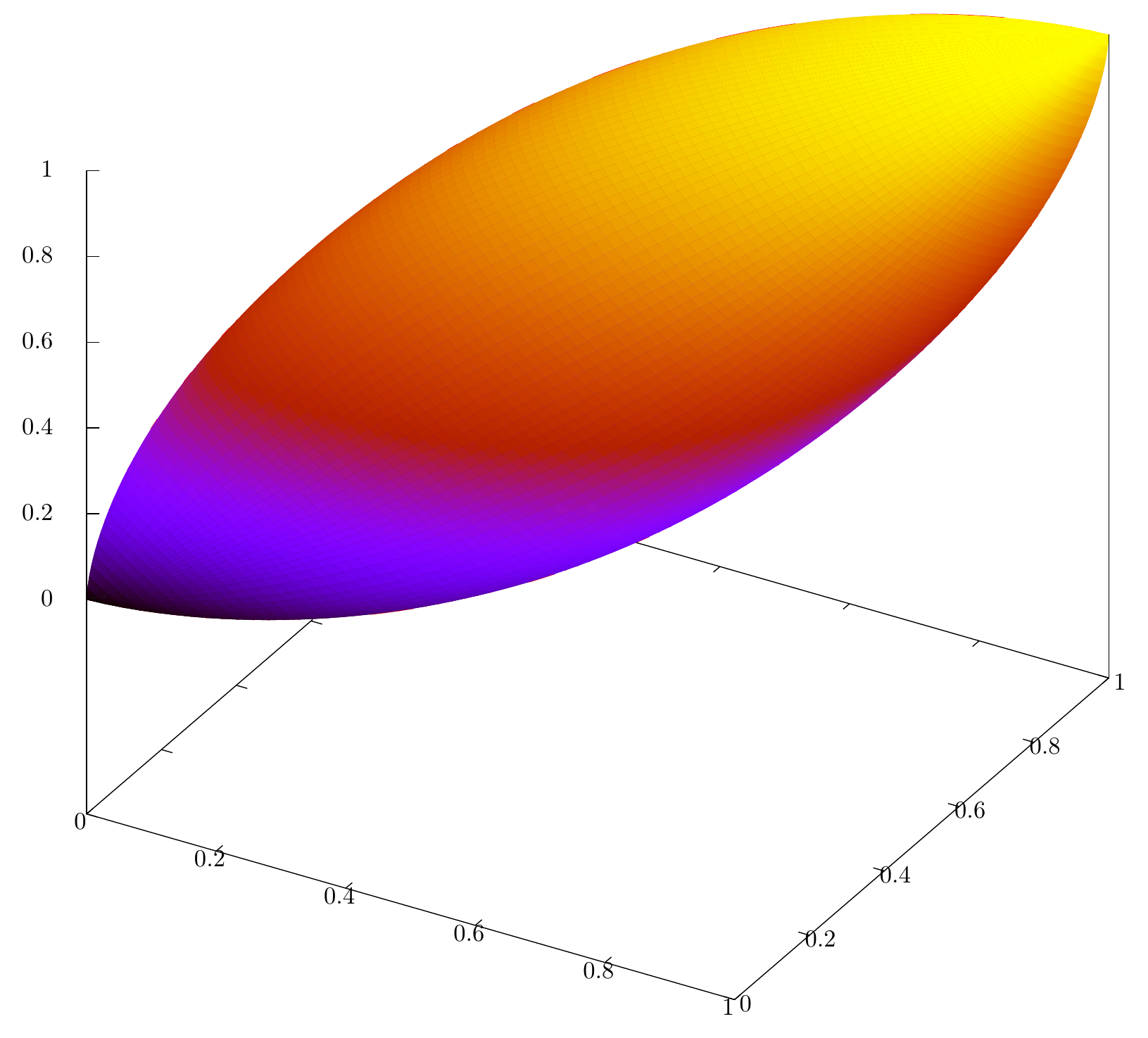}
\caption{The limiting zonoid in dimension 3.}
\end{center}
\end{figure}

The same method works in higher dimensions. There $\triangle \cap C(\vv\ge 0)$ is not a single simplex, one has to triangulate it into simplices, and on each simplex, the integral in question can be computed the same way as above. We have not carried out this computation. Yet one can show that a certain part of the boundary of $T_0$ in $\RR^d$ is described by the equation $x_1+\ldots+x_d=d(x_2\ldots x_d)^{1/d}$.

\medskip
The same limit shape comes up in another way as well. We are going to explain, rather informally, how this happens. We start with a simple proposition.

\begin{proposition}\label{prop:max} Assume $S \subset C$ is a closed set and $\int_S\xx \,d\xx=\aa$. Then $\Vol S \le q(C,\aa)$ and equality holds iff $S=Q(C,\kk)$.
\end{proposition}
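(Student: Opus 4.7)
The plan is to exploit the characterization from Section~\ref{sec:limiting_zonoid}: by Theorem~\ref{th:gigena} and the scaling that defines $\la$, the set $Q = Q(C,\aa) = C(\uu \le \la)$ is a level-set cap whose defining property is exactly $\int_Q \xx\,d\xx = \aa$, and $q(C,\aa) = \Vol Q$. So the inequality $\Vol S \le q(C,\aa)$ reduces to a direct comparison of $S$ and $Q$, and the method of choice is a level-set ``bathtub'' exchange argument based on the linear functional $\xx \mapsto \uu\cdot\xx$.

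Subtracting $\int_Q \xx\,d\xx = \aa = \int_S \xx\,d\xx$ and cancelling the contribution from $S \cap Q$ yields
\[
\int_{S \setminus Q} \xx\,d\xx \;=\; \int_{Q \setminus S} \xx\,d\xx.
\]
I would then pair both sides with the covector $\uu \in C^{\circ}$: on $S \setminus Q \subset C \setminus Q$ one has $\uu \cdot \xx > \la$, while on $Q \setminus S \subset Q$ one has $\uu \cdot \xx \le \la$. Combining these,
\[
\la \,\Vol(S \setminus Q) \;\le\; \int_{S \setminus Q} \uu \cdot \xx\,d\xx \;=\; \int_{Q \setminus S} \uu \cdot \xx\,d\xx \;\le\; \la \,\Vol(Q \setminus S).
\]
Dividing by $\la > 0$ gives $\Vol(S \setminus Q) \le \Vol(Q \setminus S)$, and adding $\Vol(S \cap Q)$ to both sides produces the desired bound $\Vol S \le \Vol Q = q(C,\aa)$.

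For the equality case, I would observe that the first inequality above is strict whenever $\Vol(S \setminus Q) > 0$, because $\uu\cdot\xx - \la$ is strictly positive on $S \setminus Q \subset \{\uu\cdot\xx > \la\}$. Hence $\Vol S = \Vol Q$ forces $\Vol(S \setminus Q) = 0$, and then $\Vol(Q \setminus S) = 0$ follows from the volume identity. The step I anticipate as mildly delicate is upgrading this almost-everywhere coincidence to the set-theoretic equality $S = Q(C,\aa)$ claimed in the proposition: for a general closed set $S$, nothing prevents a lower-dimensional ``hair'' of measure zero lying outside $Q$. The natural way to handle this is to note that $Q$ is a convex body equal to the closure of its interior, so $\Vol(Q\setminus S)=0$ combined with closedness of $S$ yields $Q\subset S$, and then to interpret the reverse inclusion modulo a null set — or to remark that in all applications of the proposition, $S$ itself is a convex body so both inclusions are genuine.
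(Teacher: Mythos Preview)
Your argument is correct and is essentially the paper's own proof: both use the level-set (``bathtub'') comparison via the linear functional $\uu\cdot\xx$, deducing $\int_{S\setminus Q}\uu\cdot\xx\,d\xx=\int_{Q\setminus S}\uu\cdot\xx\,d\xx$ and then exploiting that $\uu\cdot\xx>\la$ on $S\setminus Q$ while $\uu\cdot\xx\le\la$ on $Q\setminus S$. Your treatment of the equality case is in fact more careful than the paper's, which simply asserts that both symmetric differences have measure zero without addressing the measure-zero ``hair'' issue you flag.
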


\begin{proof} It is clear that $\int_S\xx \,d\xx=\aa$ implies that $\int_S\uu \cdot\xx\, d\xx=\uu\cdot \aa$. Note that $\int_Q\uu \cdot\xx \,d\xx=\uu\cdot \aa$ as well. Then
\[
\int_{S \setminus Q}\uu \cdot\xx \,d\xx =\int_{Q \setminus S}\uu \cdot\xx \,d\xx.
\]
The value of $\uu \cdot\xx$ is larger on $S \setminus Q$ than on $Q \setminus S$. This shows that $\Vol S \setminus Q$ is smaller than $\Vol (Q \setminus S)$ unless both are equal to zero.
\end{proof}

\medskip
Assume that $\kk \in \ZZ^d \cap \Int C$ and consider the cap $Q(C,n\kk)$ when $n$ is large. Define $V_n=\ZZ^d \cap Q(C,n\kk)$. It is known (and follows from an  estimate similar to the one in Lemma~\ref{lem:cubature}) that, as $n$ goes to infinity,
\[
\sum_{\xx \in  V_n}\xx=n\kk(1+o(1)).
\]
Suppose now that $\sum_{\xx \in S}\xx=n\kk$ for some $S \subset \ZZ^d \cap C$. Under these conditions Proposition~\ref{prop:max} implies that $|S| \le |V_n|(1+o(1))$.

This shows that the (asymptotically) largest set $S \subset \ZZ^d \cap C$ with $\sum_{\xx \in S}\xx=n\kk$ is $V_n$. The integral zonotope $T(V_n)$ has its endpoint close to $n\kk$. The limiting zonoid $T_0(C,\kk)$ from Theorem~\ref{th:lshape} turns out to be the limit of the zonotopes $\frac 1n T(V_n)$ as $n\to \infty$.

\section{The probabilistic approach}
\label{ssec:model}

Throughout this paper we work under the following condition:
\begin{equation}\label{eq:fix}
C \in \C^d \mbox{ and } \kk \in \Int C \cap \ZZ^d \mbox{ are fixed.}
\end{equation}

In this section, we give a proof of Theorem~\ref{th:lshape} and Theorem~\ref{th:size} based on statistical mechanics techniques.
Obviously, the statement of the theorem itself is already of probabilistic nature.

\subsection{Description of the model}

Let $\Omega$ be the set of all functions from $\PPrim \cap C$ to $\ZZ_+$ with finite support.
Recall from Section~\ref{sec:strict_partitions} the correspondence between lattice zonotopes, strict integer partitions, and multiplicity functions $\omega \in \Omega$.
In particular, each function $\omega$ is associated to a unique zonotope $T(\omega)$, and conversely. Moreover, the endpoint of $T(\omega)$ is
\[
    \XX(\omega) = \sum_{\xx \in \PPrim \cap C} \omega(\xx)\, \xx.
\]
The probability distribution $\mathrm Q_n$ on $\Omega$, which is defined for all $\omega \in \Omega$ by
\[\mathrm Q_n(\omega) = \dfrac{1}{p(C,n\kk)}\mathbf{1}_{\{\XX(\omega) = n\kk\}}
\] is exactly the uniform distribution on $\T(C,n\kk)$.
The conclusion of Theorem~\ref{th:lshape} can be stated as convergence in probability for this distribution:
\[
	\dist(\tfrac1n T, T_0) \xrightarrow[n\to\infty]{\mathrm Q_n} 0.
\]

We now define a \emph{new} probability distribution $\PProb_n$ on $\Omega$, which will turn out to behave roughly like $\mathrm Q_n$ when $n$ tends to $+\infty$. It depends on two parameters $\uu$ and $\beta_n$ fixed throughout the paper as follows:
\begin{equation}\label{eq:param}
\uu=\uu(C,\lambda\kk),\qquad \beta_n = \sqrt[d+1]{\frac{\zeta(d+1)}{\zeta(d)}\frac1n},
\end{equation}
where $\uu(C,\lambda\kk) = \lambda^{-1}\,\uu(C,\kk) \in C^{\circ}$ is defined by Theorem~\ref{th:gigena}, with $\lambda > 0$ chosen such that
\[
(d+1)! \int_{C(\uu \leq 1)} \xx \,d\xx = \kk.
\]
The reason for these choices will become apparent in Proposition~\ref{prop:central_limit}.
The probability distribution $\PProb_n$ is then defined for all $\omega\in\Omega$ by
\[
    \PProb_n(\omega) = \frac{1}{Z_n}e^{-\beta_n\dotp{\uu }{ \XX(\omega)}}, \qquad \text{where } Z_n = \sum_{\omega \in \Omega} e^{- \beta_n \dotp{\uu}{ \XX(\omega)}}.
\]
This definition, which is directly inspired by statistical mechanics, is a special case of the Boltzmann distribution. It is a generalization of the model introduced by Sinai{} \cite{sinai_probabilistic_1994} for convex lattice polygonal lines.
Here $Z_n$ is the so-called \emph{partition function} of the model. The sum defining $Z_n$ is easily seen to be convergent.

The crucial observation is the following: since $\PProb_n(\omega)$ only depends on the value $\XX(\omega)$, we see that for all $\mm \in \ZZ^d$,
\begin{equation}
    \label{eq:proba}
    \PProb_n[\XX = \mm] = \frac{p(C,\mm)}{Z_n}e^{-\beta_n\dotp{\uu}{\mm}}.
\end{equation}
In particular, the distribution induced on $\Omega$ by $\PProb_n$ conditional on the event $\{\XX = n\kk\}$ is exactly $\mathrm Q_n$. For all events $E \subset \Omega$,
\[
	\mathrm Q_n[E] = \PProb_n[E \mid \XX = n\kk] = \frac{\PProb_n[E \cap \{\XX = n\kk\}]}{\PProb_n[\XX = n\kk]}.
\]
Consequently, our strategy of proof for Theorem~\ref{th:lshape} is to establish first a strong limit shape result for the distribution $\PProb_n$ and then use this relation with $E = \{\dist(\frac1n T,T_0) > \epsilon\}$ to get the result for $\mathrm Q_n$. This deduction will only be possible if we show that $\PProb_n[\XX = n\kk]$ is not too small.

In comparison with $\mathrm Q_n$, the distribution $\PProb_n$ has a much simpler probabilistic structure. It is a product distribution on $\Omega$ since, by definition of $\XX$,
\[
	e^{-\beta_n \dotp{\uu}{\XX(\omega)}} = \prod_{\xx \in \PPrim\cap C} e^{-\beta_n \omega(\xx)\dotp{\uu}{\xx}}.
\]	
Thus, the family of random variables $(\omega(\xx))_{\xx \in \PPrim \cap C}$ is independent under $\PProb_n$.
In addition, one can see that for all $\xx \in \PPrim \cap C$, the integer-valued random variable $\omega(\xx)$ has geometric distribution with parameter $e^{-\beta_n\dotp{\uu}{ \xx}}$ (failure probability), that is to say
\begin{equation}\label{eq:geomdistr}
    \PProb_n[\omega(\xx) = i] = (1-e^{-\beta_n \dotp{\uu}{\xx}})\,e^{- i \beta_n \dotp{\uu}{\xx}}, \qquad i \in \ZZ_+.
\end{equation}
These two facts lead to a remarkable factorization of $Z_n$ as convergent product which is similar to Euler's formula,
\[
    Z_n = \prod_{\xx \in \PPrim \cap C} \frac{1}{1-e^{-\beta_n \dotp{\uu}{\xx}}}.
\]
Taking logarithms, we obtain finally a series expansion for $\log Z_n$,
\begin{equation}
  \label{eqn:logZ}
    \log Z_n = \sum_{\xx\in\PPrim \cap C} \sum_{r \geq 1} \frac{e^{-r \beta_n\dotp{\uu}{\xx}}}{r}.
\end{equation}
The same expression is used in \cite{barany_limit_1995} and \cite{sinai_probabilistic_1994}.

\subsection{Asymptotic behaviour after rescaling}

In this section, we investigate the asymptotic distribution of the random vector $\XX$. It turns out that, after a proper rescaling, it can described completely by the Laplace transform (also called a characteristic function in \cite{faraut_analysis_1994}) of the cone $C$, which is defined by
\[
	\Lambda_C(\vv) = \int_C e^{-\dotp{\vv}{\xx}}\,d\xx, \qquad \vv \in C^\circ.
\]
When there is no risk of confusion with another cone, we write $\Lambda = \Lambda_C$.

\begin{proposition}
    \label{prop:central_limit}
    Let $\bm\mu_n$ and $\Gamma_n$ denote respectively the mean value and the covariance matrix of the random vector $\XX$ under the distribution $\PProb_n$. Then
    \begin{equation}
        \lim_{n\to+\infty} \frac{1}{n}\, \bm\mu_n = - \nabla \Lambda(\uu) = \kk, \quad\text{and}\quad
        \lim_{n\to+\infty} n^{-\frac{d+2}{d+1}}\,\Gamma_n = \nabla^2 \Lambda(\uu),
    \end{equation}
    where $\nabla \Lambda$ and $\nabla^2 \Lambda$ denote respectively the gradient and the Hessian matrix.

    Moreover, the distribution of $\XX$ satisfies a central limit theorem in the sense that
    \[
        \Gamma_n^{-1/2}(\XX - \bm\mu_n) \xrightarrow[n\to+\infty]{\text{law}} \mathcal{N}(0,I_d).
    \]
\end{proposition}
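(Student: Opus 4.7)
The plan is to exploit the independence of the variables $(\omega(\xx))_{\xx\in \PPrim\cap C}$ under $\PProb_n$. Since the moment generating function factorizes as $\EE[e^{-\ta\cdot\XX}] = Z_n(\beta_n\uu+\ta)/Z_n(\beta_n\uu)$, the mean and covariance are the first two derivatives of the cumulant generating function $\vv\mapsto\log Z_n(\vv)$ at $\vv=\beta_n\uu$:
\[
\bm\mu_n=-\nabla\log Z_n(\beta_n\uu),\qquad \Gamma_n=\nabla^2\log Z_n(\beta_n\uu).
\]
The Euler-type expansion \eqref{eqn:logZ} yields the explicit representations
\[
\bm\mu_n=\sum_{\xx\in\PPrim\cap C}\sum_{r\ge 1}\xx\,e^{-r\beta_n\uu\cdot\xx},\quad \Gamma_n=\sum_{\xx\in\PPrim\cap C}\sum_{r\ge 1}r\,\xx\xx^{\top}e^{-r\beta_n\uu\cdot\xx}.
\]

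Next I would compare these sums with the corresponding integrals over $C$. The key tool is a Riemann-type cubature estimate (Lemma~\ref{lem:cubature}, established later in the paper): for suitably regular $g:C\to\RR$,
\[
\sum_{\xx\in\PPrim\cap C}g(\beta_n\xx)=\frac{1}{\zeta(d)\,\beta_n^{d}}\int_C g(\yy)\,d\yy+o(\beta_n^{-d}),
\]
reflecting the fact that primitive vectors in $\ZZ^d$ have asymptotic density $1/\zeta(d)$. Applying this to each summand, combined with the homogeneity $\Lambda(c\vv)=c^{-d}\Lambda(\vv)$ and the identity $\sum_{r\ge 1}r^{-(d+1)}=\zeta(d+1)$, gives the leading-order asymptotic
\[
\bm\mu_n\sim-\frac{\zeta(d+1)}{\zeta(d)\,\beta_n^{d+1}}\,\nabla\Lambda(\uu)=-n\,\nabla\Lambda(\uu),
\]
and the analogous calculation for the Hessian produces the stated $\Gamma_n$ asymptotic. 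The identity $-\nabla\Lambda(\uu)=\kk$ follows by slicing: parametrizing $\int_C\xx\,e^{-\uu\cdot\xx}\,d\xx$ along the sections $C(\uu=s)$ extracts a radial factor $s^{d}e^{-s}$, integrating to $(d+1)!$, multiplied by the transverse integral $\int_{C(\uu\le 1)}\xx\,d\xx$, which equals $\kk/(d+1)!$ by our defining choice of $\uu$.

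For the central limit theorem, I would once more exploit the product structure of $\PProb_n$ to express the characteristic function of $\Gamma_n^{-1/2}(\XX-\bm\mu_n)$ as a product over $\xx\in \PPrim\cap C$ of characteristic functions of geometric random variables, each of the form $\phi_\xx(s)=(1-p_\xx)/(1-p_\xx e^{is})$ with $p_\xx=e^{-\beta_n\uu\cdot\xx}$. Setting $s_\xx=\ta\cdot\Gamma_n^{-1/2}\xx$ and Taylor-expanding $\log\phi_\xx(s_\xx)$ to second order (valid uniformly since $\|\Gamma_n^{-1/2}\|=O(n^{-(d+2)/(2(d+1))})$ while the effectively contributing primitive vectors satisfy $|\xx|=O(\beta_n^{-1})=O(n^{1/(d+1)})$), the linear terms cancel the centering $-i\ta\cdot\Gamma_n^{-1/2}\bm\mu_n$, the quadratic terms sum to $-\tfrac12\ta^{\top}\Gamma_n^{-1/2}\Gamma_n\Gamma_n^{-1/2}\ta=-\tfrac12|\ta|^2$, and the cubic remainder, controlled by $O(\sum_\xx\|\Gamma_n^{-1/2}\xx\|^3\,\EE|\omega(\xx)-\EE\omega(\xx)|^3)$, vanishes by another application of the cubature lemma. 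L\'evy's continuity theorem then yields the CLT.

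The main technical obstacle is the cubature lemma itself. Although the asymptotic density of primitive lattice points in $\ZZ^d$ is classical, one needs sharp quantitative error bounds uniform over $\uu$ and valid for integrands with the mild logarithmic singularity of $-\log(1-e^{-\uu\cdot\xx})$ at the apex of $C$; separating the contribution of small primitive vectors from the bulk is the real technical work. Once that estimate is in hand, the rest of the argument reduces to elementary calculus on $\Lambda$ and standard CLT manipulations.
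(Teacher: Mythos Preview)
Your proposal is correct and follows essentially the same route as the paper: identify $\bm\mu_n$ and $\Gamma_n$ as derivatives of $\log Z$, expand termwise in $r$, replace each primitive-lattice sum by the corresponding integral, and verify a Lyapunov-type third-moment bound for the CLT. Two small remarks. First, the paper packages the CLT step by directly invoking Lyapunov's criterion rather than re-deriving it via a second-order expansion of the characteristic function; the content is the same (your ``cubic remainder'' is exactly $L_n$), but citing Lyapunov is cleaner. Second, your closing worry about the logarithmic singularity of $-\log(1-e^{-\uu\cdot\xx})$ at the apex is unnecessary: since you already work term-by-term in $r$, each integrand $\xx\mapsto f(\xx)e^{-r\beta\uu\cdot\xx}$ is smooth and homogeneous, and the relevant appendix result is the primitive-sum estimate (Proposition~\ref{prop:primitive_sums}), not the raw cubature Lemma~\ref{lem:cubature} you cite.
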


\begin{proof}
    We introduce a function $Z : C^\circ \to \RR_+$ such that $Z_n = Z(\beta_n \uu)$. It is defined for all $\vv\in C^\circ$ by
    \[
        Z(\vv) = \sum_{\omega\in\Omega} e^{-\vv \cdot \XX(\omega)}.
    \]

    Let $\beta = \beta_n$ in the proof.
    We start with the statement about $\bm\mu_n$. 
    It is a well known fact that in such an exponential model, the first moments of $\XX$ are given by the logarithmic derivatives of the function $Z$. In the case of the mean value, it results from the following simple computation:
    \[
	    \bm\mu_n = \EE_n[\XX] = \sum_{\omega \in \Omega} \frac{e^{-\beta \dotp{\uu}{ \XX(\omega)}}}{Z(\beta\uu)}  \XX(\omega) =  -\frac{\nabla Z(\beta\uu)}{Z(\beta\uu)} =      -\nabla \log Z(\beta \uu).
    \]
    Now, a straightforward generalization of formula \eqref{eqn:logZ} to the function $Z$ leads to a series expansion of $\nabla \log Z$, namely
    \[
        -\nabla \log Z(\beta \uu) = \sum_{r \geq 1} \sum_{\xx \in \PPrim \cap C} e^{-r\beta \dotp{\uu}{\xx}} \xx.
    \]
    For every index $r$ less than $1/\beta$, we approximate the
    summation on $\PPrim\cap C$ with the corresponding $d$-dimensional integral
    by using Proposition~\ref{prop:primitive_sums}.
    Since the terms with $r > 1/\beta$ contribute only $O(1/\beta)$,
    we obtain after summation
    \[
	    \bm\mu_n = \frac{1}{\beta^{d+1}} \frac{\zeta(d+1)}{\zeta(d)}\int_C e^{-\dotp{\uu}{\xx}}\xx\,d\xx + O\left(\frac{1}{\beta^d}\right)
    \]
    in the limit $\beta\to 0$.  The integral here is obviously $-\nabla \Lambda(\uu)$. We need check that it is also equal to $\kk$. This is done by applying the fundamental theorem of calculus and the Fubini-Tonelli theorem:
    \[
      \int_C e^{-\uu\cdot \xx} \xx\,d\xx
        = \int_C \int_{\uu\cdot \xx}^\infty e^{-t}\,dt\,\xx\,d\xx
        = \int_0^\infty e^{-t}\int_{C(\uu \leq t)} \xx\,d\xx \,dt.
    \]
    An homothetic change of variable yields therefore by homogeneity:
    \[
      \int_C e^{-\uu\cdot \xx} \xx\,d\xx =
      \left[\int_0^\infty t^{d+1}e^{-t}\,dt\right]\int_{C(\uu\leq 1)} \xx\,d\xx =
      (d+1)! \int_{C(\uu\leq 1)} \xx\,d\xx = \kk.
    \]
    So the first part of the proposition is finally proven by replacing $\beta$ with its expression as a function of $n$.

    \medskip
    The proof of the asymptotic behaviour of the covariance matrix
    $\Gamma_n$ is entirely similar except that we consider the second
    order derivatives of the logarithmic partition function. We omit the details.

    \medskip
    We now turn to the central limit result.
    Recall that the random variables $\omega(\xx)$ are independent and each $\omega(\xx)$
    follows the geometric distribution with parameter $e^{-\beta \dotp{\uu}{x}}$.
    In particular, $\Gamma_n^{-1/2}(\XX-\bm\mu_n)$ is the
    sum of the independent random variables $(\omega(\xx) -
    \EE[\omega(\xx)])\,\Gamma_n^{-1/2}\xx$
    of mean value $0$.
    We are going to check the classical Lyapunov condition on third moments, which implies
    the central limit theorem for independent, but not necessarily
    identically distributed, summands. We must show that
    \[
	L_n = \sum_{\xx \in \PPrim \cap C} \EE\left[\left\|(\omega(\xx) - \EE[\omega(\xx)])\,\Gamma_n^{-1/2}\xx \right\|^3\right]
    \]
    goes to $0$ as $n$ tends to $+\infty$.
    Elementary computations on the geometric distribution yield,
    for all $\xx \in \PPrim\cap C$,
    \[
	    \EE[\omega(\xx)] = \frac{e^{-\beta\dotp{\uu}{\xx}}}{1-e^{-\beta\dotp{\uu}{\xx}}}, \quad\text{and}\quad
        \EE\left[\left|\omega(\xx) - \EE[\omega(\xx)]\right|^3\right] \leq \frac{3\,e^{-\beta \dotp{\uu}{\xx}}}{(1-e^{-\beta \dotp{\uu}{\xx}})^3}.
    \]
    Letting $\|\Gamma_n^{-1/2}\|$ denote the operator norm, we obtain therefore,
    \[
        L_n \le 3\|\Gamma_n^{-1/2}\|^3 \sum_{\xx \in \PPrim \cap C} \frac{\|\xx\|^3 e^{-\beta \dotp{\uu}{\xx}}}{(1-e^{-\beta \dotp{\uu}{\xx}})^3}.
    \]
    From the first part of the proposition, we already know that
    $\|\Gamma_n^{-1/2}\|$ is of order ${n^{-\frac 1 2(d+2)/(d+1)}}$.
Moreover, Proposition~\ref{prop:primitive_sums} shows that
\[
	\sum_{\xx \in \PPrim \cap C} \frac{\|\xx\|^3 e^{-\beta \dotp{\uu}{\xx}}}{(1-e^{-\beta \dotp{\uu}{\xx}})^3} \quad\text{and}\quad n^{\frac{d+3}{d+1}}\int_C \|\xx\|^3 e^{-\dotp{\uu}{\xx}}d\xx
\]
are of the same order of magnitude as $n$ tends to $+\infty$.
Accordingly, $L_n$ is at most of order $n^{-d/(2d+2)}$, hence tends to $0$ and we can indeed apply Lyapunov's central limit theorem. The proof is complete.
\end{proof}

Letting $\sigma_n = n^{\frac{d+2}{2d+2}}$, the central limit theorem of Proposition~\ref{prop:central_limit} can now be stated as
\begin{equation}
    \label{eq:central_limit}
    \frac{\XX - n\kk}{\sigma_n} \xrightarrow[n\to\infty]{\text{law}} \mathcal{N}(0,\nabla^2 \Lambda(\uu)).
\end{equation}
The proof of the next proposition will use  this fact.

\begin{proposition}[Weak local limit estimate]
    \label{prop:zono_local}
    \[
        \PProb_n[\XX = n\kk] = e^{-o(n^{d/(d+1)})}
    \]
\end{proposition}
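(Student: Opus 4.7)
The plan is to estimate $\PProb_n[\XX = n\kk]$ by Fourier inversion on the integer-valued vector $\XX$. Since under $\PProb_n$ the random variables $\omega(\xx)$ are independent geometrics, the characteristic function factors as
\[
    \phi_n(\mathbf{t}) := \EE_n\bigl[e^{i \mathbf{t}\cdot \XX}\bigr] = \prod_{\xx \in \PPrim \cap C} \frac{1-q_\xx}{1-q_\xx e^{i \mathbf{t}\cdot \xx}}, \qquad q_\xx = e^{-\beta_n \uu\cdot \xx},
\]
and Fourier inversion on $\ZZ^d$ gives
\[
    \PProb_n[\XX = n\kk] = \frac{1}{(2\pi)^d}\int_{[-\pi,\pi]^d} e^{-i\mathbf{t} \cdot n\kk}\phi_n(\mathbf{t})\, d\mathbf{t}.
\]
I would split this integral into a small ball $B_{\delta_n}$ of radius $\delta_n$ about the origin and its complement $R_n = [-\pi,\pi]^d\setminus B_{\delta_n}$, choosing $\delta_n$ so that $\delta_n\sigma_n \to \infty$ (to capture the Gaussian bulk of $\phi_n$) while $\delta_n^3 M_n \to 0$ (so the cubic Taylor remainder is negligible), where $M_n$ is a third-moment sum controlled by the Lyapunov estimates already carried out in the proof of Proposition~\ref{prop:central_limit}.

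On $B_{\delta_n}$ I would apply the third-order expansion
\[
    \log\phi_n(\mathbf{t}) = i\mathbf{t}\cdot \bm\mu_n - \tfrac12 \mathbf{t}^\top \Gamma_n \mathbf{t} + O(\|\mathbf{t}\|^3 M_n),
\]
absorb the cubic remainder, and extend the resulting Gaussian integral from $B_{\delta_n}$ to $\RR^d$ at only exponentially small cost in $\delta_n\sigma_n$. This produces the familiar saddle-point value
\[
    \frac{1}{\sqrt{(2\pi)^d\det \Gamma_n}}\exp\!\left(-\tfrac12(n\kk - \bm\mu_n)^\top\Gamma_n^{-1}(n\kk-\bm\mu_n)\right)\bigl(1+o(1)\bigr).
\]
By Proposition~\ref{prop:central_limit}, $\|\bm\mu_n - n\kk\| = O(n^{d/(d+1)})$ and the eigenvalues of $\Gamma_n$ are of order $n^{(d+2)/(d+1)}$, so the quadratic form in the exponent is $O(n^{(d-2)/(d+1)}) = o(n^{d/(d+1)})$, while $\tfrac12 \log\det \Gamma_n = O(\log n)$ is also $o(n^{d/(d+1)})$. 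The contribution from $B_{\delta_n}$ alone therefore already yields the desired lower bound $e^{-o(n^{d/(d+1)})}$.

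The step I expect to be the main obstacle is showing that the outer contribution $\int_{R_n} |\phi_n(\mathbf{t})|\,d\mathbf{t}$ does not overwhelm the main term---concretely, proving $|\phi_n(\mathbf{t})| \leq e^{-cn^{d/(d+1)}}$ uniformly for $\mathbf{t} \in R_n$. Using the identity $|\phi_\xx(\mathbf{t})|^2 = (1-q_\xx)^2/\bigl((1-q_\xx)^2 + 2q_\xx(1-\cos(\mathbf{t}\cdot \xx))\bigr)$, the factorization reduces this bound to a uniform estimate of the form
\[
    \sum_{\xx \in S_n}\bigl(1 - \cos(\mathbf{t}\cdot \xx)\bigr) \geq c\, n^{d/(d+1)}, \qquad \delta_n \leq \|\mathbf{t}\| \leq \pi\sqrt d,
\]
where $S_n = \{\xx \in \PPrim \cap C : q_\xx \geq 1/2\}$ has cardinality $\Theta(n^{d/(d+1)})$ by Proposition~\ref{prop:primitive_sums}. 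Establishing this uniform Weyl-type equidistribution statement for primitive lattice points in a scaled cap of $C$---handling primitivity by Möbius inversion in the same spirit as Proposition~\ref{prop:primitive_sums}---is the technical heart of the proof. Once it is in hand, combining it with the small-$\mathbf{t}$ analysis completes the argument.
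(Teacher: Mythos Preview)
Your Fourier-inversion/saddle-point scheme is a legitimate route to the statement, and the scaling bookkeeping you give (size of $\bm\mu_n - n\kk$, eigenvalues of $\Gamma_n$, the quadratic form being $O(n^{(d-2)/(d+1)})$) is correct. However, the paper's argument is completely different and far shorter. It observes a monotonicity: if $\nn - \mm \in C$ then $p(C,\nn) \geq p(C,\mm)$, because any partition of $\mm$ can be extended to a partition of $\nn$. One then takes the slab $A_n = \{\mm : n\kk - \mm \in C,\ \uu\cdot(n\kk-\mm)\leq \sigma_n\}$; the central limit theorem already established gives $\PProb_n[\XX \in A_n] \geq c$, while monotonicity and \eqref{eq:proba} give $\PProb_n[\XX \in A_n] \leq |A_n|\,e^{\beta_n\sigma_n}\,\PProb_n[\XX = n\kk]$. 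Since $|A_n|$ is polynomial in $n$ and $\beta_n\sigma_n = O(n^{d/(2d+2)})$, the desired lower bound on $\PProb_n[\XX = n\kk]$ drops out in two lines.

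The trade-off is clear. Your approach, if fully carried out, would yield a genuine local limit theorem with a sharp polynomial main term, which is strictly more than the proposition asks. But you correctly flag the bottleneck: the uniform bound $|\phi_n(\mathbf t)| \leq e^{-cn^{d/(d+1)}}$ away from the origin is a nontrivial Weyl-type equidistribution statement for primitive lattice points in the cap, and you have only sketched how it might go. The paper bypasses this entirely by exploiting the cone structure through the monotonicity of $p(C,\cdot)$, at the price of a weaker (but sufficient) conclusion. For the purposes of Theorem~\ref{th:lshape}, where the bound is used only to pass from $\PProb_n$ to $\mathrm Q_n$ by conditioning, this weak estimate is all that is needed, so the paper's two-line argument is the right tool.
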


\begin{proof}
    We begin with the following simple remark: if $\mm \in \ZZ^d \cap C$ and $\nn \in \ZZ^d \cap C$ satisfy $\nn - \mm \in C$, then $p(C,\nn) \geq p(C,\mm)$. This explains the introduction of the following set,
    \[
        A_n = \{\mm \in \ZZ^d \cap C \mid n\kk - \mm \in C \text{ and } \dotp{ \uu}{(n\kk - \mm) } \leq \sigma_n\}.
    \]
    As a consequence of \eqref{eq:central_limit}, there exists $c > 0$ such that $\PProb_n[\XX \in A_n] \geq c$ for all $n$ large enough. In addition, equation~\eqref{eq:proba} and the remark above imply that
    \begin{align*}
        \PProb_n[\XX \in A_n] &= \sum_{\mm \in A_n} p(C,\mm)e^{-\beta_n\dotp{\uu}{\mm}}\\
		&= \sum_{\mm \in A_n} \frac{p(C,\mm)}{p(C,n\kk)} e^{\beta_n \dotp{\uu}{(n\kk-\mm)}} \PProb_n[\XX=n\kk]\\
	  &    	\leq |A_n|e^{\beta_n \sigma_n}\PProb_n[\XX = n\kk].
\end{align*}
    Since $\sigma_n = n^{(d+2)/(2d+2)}$, the factor $|A_n| = O(\sigma_n^d)$ grows only as a power of $n$ while $e^{\beta_n\sigma_n} = e^{O(n^{d/(2d+2)})}$. Since $\PProb_n[\XX \in A_n] \geq c$, the result is proven.
\end{proof}

\subsection{The number of generators of $T \in \T(C,n\kk)$}
\label{ssec:number_generators}

Zonotopes in $\T(C,n\kk)$ have been identified with functions $\omega: \PPrim \cap C \to \ZZ_+$ with finite support. The {\sl generators} of $T \in \T(C,n\kk)$ are those $\xx \in \PPrim$ for which $\omega(\xx)>0$, we write $G(T)$ for the set of generators of $T$. Using the method of the previous subsection we determine the expected number $\mu_n$ of the number of generators of  $T \in \T(C,n\kk)$, under the distribution $\mathrm \PProb_n$. By Proposition~\ref{prop:zono_local}, it will also lead to an estimate under the uniform distribution $\mathrm Q_n$ on $\T(C_n\kk)$.

\begin{lemma}\label{l:generator}
\[
\mu_n=\EE[|G(T)|]=\frac 1{\zeta(d)\beta_n^d}\Lambda_C(\uu)(1+O(\beta_n)).
\]
\end{lemma}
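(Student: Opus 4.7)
The plan is to recognize that under $\PProb_n$ the generators correspond to those $\xx$ with $\omega(\xx)\geq 1$, and to use the independence and geometric distributions from \eqref{eq:geomdistr} to turn $\EE[|G(T)|]$ into a plain sum over $\PPrim\cap C$. Concretely, writing $|G(T)|=\sum_{\xx\in\PPrim\cap C}\indicator{\omega(\xx)\ge 1}$ and using $\PProb_n[\omega(\xx)\ge 1]=e^{-\beta_n\uu\cdot\xx}$, one obtains
\[
\mu_n \;=\; \sum_{\xx\in\PPrim\cap C} e^{-\beta_n\dotp{\uu}{\xx}}.
\]
This is precisely the kind of sum over primitive vectors that Proposition~\ref{prop:primitive_sums} is designed to handle, so the rest is an application of that estimate.

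Next, I would evaluate the asymptotics of this sum. The heuristic is the familiar Möbius/Dirichlet-series identity: every nonzero vector in $\ZZ^d\cap C$ factors uniquely as $h\ww$ with $h\ge 1$ and $\ww\in\PPrim\cap C$, so for any function $f$ one has $\sum_{\ww\in\PPrim\cap C}f(\ww)=\sum_{h\ge 1}\mu(h)\sum_{\xx\in\ZZ^d\cap C\setminus\{0\}}f(h\xx)$. Applied to $f(\xx)=e^{-\beta_n\uu\cdot\xx}$ and combined with the Riemann sum approximation
\[
\sum_{\xx\in\ZZ^d\cap C\setminus\{0\}} e^{-h\beta_n\uu\cdot\xx} \;=\; \int_C e^{-h\beta_n\uu\cdot\xx}\,d\xx + O\bigl((h\beta_n)^{-(d-1)}\bigr) \;=\; \frac{\Lambda_C(\uu)}{(h\beta_n)^d}\bigl(1+O(h\beta_n)\bigr),
\]
where the last equality uses the homothetic change of variables $\xx \mapsto \xx/(h\beta_n)$, summation over $h\ge 1$ against $\mu(h)$ produces the factor $\sum_{h\ge 1}\mu(h)/h^d=1/\zeta(d)$. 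This yields
\[
\mu_n \;=\; \frac{\Lambda_C(\uu)}{\zeta(d)\,\beta_n^d}\bigl(1+O(\beta_n)\bigr),
\]
which is exactly the claimed asymptotic.

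The main obstacle is controlling the error in the passage from the sum over $\PPrim\cap C$ to the integral $\Lambda_C(\uu)$, uniformly in the truncation $h\le 1/\beta_n$ and with a remainder no worse than $O(\beta_n)$ after the final rescaling. This is where one must rely on Proposition~\ref{prop:primitive_sums}: the cone boundary contributes a boundary-layer error one order smaller than the main term, and the tail $h>1/\beta_n$ contributes only exponentially small remnants because $\uu\in C^\circ$ makes $e^{-\beta_n\uu\cdot\xx}$ genuinely decaying on $C$. Once those two routine bounds are in place, multiplying by $\mu(h)$ and summing gives the $(1+O(\beta_n))$ relative error. Everything else is bookkeeping.
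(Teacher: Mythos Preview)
Your proposal is correct and follows essentially the same route as the paper: you express $\mu_n=\sum_{\xx\in\PPrim\cap C}e^{-\beta_n\uu\cdot\xx}$ via the geometric law \eqref{eq:geomdistr} and then invoke Proposition~\ref{prop:primitive_sums} with $f\equiv 1$ (degree $h=0$), which immediately gives $\beta_n^d\mu_n=\zeta(d)^{-1}\Lambda_C(\uu)+O(\beta_n)$. Your M\"obius/Riemann-sum paragraph is just an inline sketch of the content of Proposition~\ref{prop:primitive_sums} and Corollary~\ref{cor:cubature_lattice}, so no genuinely different idea is introduced.
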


\begin{proof}Let $\beta=\beta_n$ in the proof. Formula (\ref{eq:geomdistr}) implies that
$$\PProb_n[\omega(\mathbf{x}) > 0] = e^{-\beta \dotp{ \mathbf{u}}{ \mathbf{x} }}.$$

A consequence of Proposition~\ref{prop:primitive_sums} is that
$$\left|\sum_{x \in \mathbb{Z}^d\cap C} e^{-\beta \dotp{ \mathbf{u}}{ \mathbf{x} }} - \frac{1}{\zeta(d)} \int_C e^{-\beta \dotp{ \mathbf{u}}{ \mathbf{x} }} d\mathbf{x} \right| \leq \frac{c_d}{\beta^{d - 1}},$$
for some constant $c_d> 0$ depending on the dimension. Here
$$
\int_C e^{-\beta \dotp{ \mathbf{u}}{ \mathbf{x} }} d\mathbf{x}=\frac 1{\beta^d}\int_C e^{ \dotp{ \mathbf{u}}{ \mathbf{x} }} d\mathbf{x}=\frac 1{\beta^d}\Lambda_C(u),
$$
and so
$$\mu = \frac{1}{\zeta(d)\beta^d}\Lambda_C(\mathbf{u})(1+ O(\beta)).$$
\end{proof}

As $\beta_n^{-d}=\Theta\left( n^{d/(d+1)}\right)$ follows from the definition of $\beta$, the order of magnitude of $\mu$ is $n^{d/(d+1)}$.

We will need a similar result for the number of generators lying in a hyperplane $H$.

\begin{lemma}\label{l:generatorinH} Assume $H$ is a hyperplane in $\RR^d$ containing the origin. Then
\[
\mu_H=\EE[|G(T)\cap H|]=O(n^{(d-1)/(d+1)}),
\]
where the implicit constant in $O(\cdot)$ is independent of $H$.
\end{lemma}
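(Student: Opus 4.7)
The plan is to imitate Lemma~\ref{l:generator}, but with every sum restricted to the hyperplane $H$. Writing $\beta=\beta_n$ and applying \eqref{eq:geomdistr} as in the previous lemma,
\[
\mu_H = \sum_{\xx \in \PPrim \cap C \cap H} e^{-\beta \dotp{\uu}{\xx}} \leq \sum_{\xx \in (\ZZ^d \cap C \cap H)\setminus\{\mathbf 0\}} e^{-\beta \dotp{\uu}{\xx}}.
\]
Since $\uu \in C^\circ$ and $C\cap \mathbb{S}^{d-1}$ is compact, there exists $c_0 > 0$ (depending only on $C$ and $\uu$) with $\dotp{\uu}{\xx} \geq c_0\|\xx\|$ for all $\xx \in C$. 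Using this and dropping the cone restriction, it suffices to prove
\[
\sum_{\xx \in (\ZZ^d \cap H)\setminus\{\mathbf 0\}} e^{-c_0 \beta \|\xx\|} = O\bigl(\beta^{-(d-1)}\bigr)
\]
uniformly in $H$.

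The key input is a uniform lattice-point bound. The set $\ZZ^d \cap H$ is a sublattice of $\ZZ^d$ of some rank $r\leq d-1$ contained in $H$, and because $\ZZ^d\cap H \subset \ZZ^d$ its minimum nonzero length is at least $1$. A simple packing argument---the open $r$-dimensional balls of radius $\tfrac12$ around its points are pairwise disjoint inside $\mathrm{span}(\ZZ^d\cap H)$---gives
\[
\bigl|(\ZZ^d \cap H) \cap B_R\bigr| \leq (2R+1)^r \leq 3^{d-1} R^{d-1} \qquad \text{for } R \geq 1,
\]
where $B_R$ denotes the Euclidean ball of radius $R$ about the origin. Critically, the constant $3^{d-1}$ depends only on $d$, not on $H$.

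A layer-cake computation then closes the argument:
\[
\sum_{\xx \in (\ZZ^d \cap H)\setminus\{\mathbf 0\}} e^{-c_0 \beta \|\xx\|} = \int_0^\infty c_0 \beta\, e^{-c_0 \beta t}\, \bigl|(\ZZ^d \cap H)\cap B_t\setminus\{\mathbf 0\}\bigr|\,dt \leq 3^{d-1}\int_1^\infty c_0 \beta\, e^{-c_0 \beta t}\, t^{d-1}\,dt,
\]
which is $O(\beta^{-(d-1)})$ after the substitution $s=c_0\beta t$. Since $\beta_n^{-(d-1)} = \Theta(n^{(d-1)/(d+1)})$ by the definition of $\beta_n$ in \eqref{eq:param}, the lemma follows.

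The main obstacle, and the reason this is not a direct corollary of Proposition~\ref{prop:primitive_sums}, is the required uniformity in $H$. A direct integral-comparison would introduce a $1/\det(\ZZ^d\cap H)$ factor, but the covolume of $\ZZ^d\cap H$ varies wildly with $H$ and is unbounded. The coarse packing bound sidesteps this entirely: it is weak enough to be $H$-uniform, yet sharp enough to produce the correct exponent $(d-1)/(d+1)$ in $n$.
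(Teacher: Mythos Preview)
Your proof is correct, and it takes a genuinely different route from the paper's.

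The paper also starts from $\mu_H=\sum_{\xx\in\PPrim\cap C\cap H}e^{-\beta\dotp{\uu}{\xx}}$, but then compares this lattice sum with the $(d-1)$-dimensional integral $\int_{C\cap H}e^{-\beta\dotp{\uu}{\xx}}\,d\xx$ via a one-sided Riemann-sum estimate (Lemma~\ref{lem:one_sided}). The uniformity in $H$ in that lemma is obtained by invoking Vaaler's theorem that any central hyperplane section of the unit cube has $(d-1)$-volume at least $1$; this guarantees that the pieces $Q(\xx)\cap H$ used in the comparison carry enough measure regardless of the slope of $H$. The resulting integral is then seen to be $O(\beta^{-(d-1)})$ uniformly, using the same inequality $\dotp{\uu}{\xx}\ge c_0\|\xx\|$ that you exploit.

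Your argument sidesteps the integral comparison entirely and hence avoids Vaaler's theorem: the single observation that $\ZZ^d\cap H$ has minimum nonzero length $\ge 1$ gives the packing bound $|(\ZZ^d\cap H)\cap B_R|\le(2R+1)^{d-1}$, uniform in $H$, and the layer-cake step does the rest. This is more elementary and entirely self-contained; the price is that you do not recover the leading-term integral $\int_{C\cap H}e^{-\dotp{\uu}{\xx}}d\xx$, but only the order of magnitude --- which is exactly what the lemma asks for. Your closing remark about why a naive integral comparison would fail (the unbounded covolume of $\ZZ^d\cap H$) is well taken and is precisely the difficulty that the paper's appeal to Vaaler resolves in a different way.
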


\begin{proof}
  Let $\beta=\beta_n$ in the proof.
Following the above argument, we have
$$
\mu_H = \EE [|G(T) \cap H|] = \sum_{\mathbf{x} \in \mathbb{Z}^d\cap C \cap H} e^{-\beta \dotp{ \mathbf{u}}{ \mathbf{x} }}.
$$
The one-sided estimate of Lemma~\ref{lem:one_sided} gives now that
$$
\sum_{\mathbf{x} \in \mathbb{Z}^d\cap C \cap H} e^{-\beta \dotp{ \mathbf{u}}{ \mathbf{x} }}
\leq
\zeta(d)^{-1}\int_{C \cap H} e^{-\beta \dotp{ \mathbf{u}}{ \mathbf{x} }}d\xx + \frac{c_{d-1}}{\beta^{d-2}}.
$$
Here
$$
\int_{C \cap H} e^{-\beta \dotp{ \mathbf{u}}{ \mathbf{x} }} d\xx
= \frac 1{\beta^{d-1}}\int_{C \cap H} e^{ \dotp{ \mathbf{u}}{ \mathbf{x} }} d\xx.
$$
Thus indeed
$$\mu_{H} \leq \frac{1+o(1)}{\zeta(d)\beta^{d-1}}\int_{C \cap H}e^{-\dotp{ \mathbf{u}}{ \mathbf{x} }} d\xx =O(n^{(d-1)/(d+1)}).
$$
\end{proof}

\subsection{Limit shapes}

In this section, we are going to show the existence of a limit shape for random zonotopes drawn under the uniform distribution on the set of all lattice zonotopes in the cone $C$ with endpoint $n\kk$.

For every convex compact subset $A$ of $\RR^d$, the \emph{support function} of $A$ is the continuous function $h_A \colon \RR^d \to \RR$ defined by
\[
    h_A(\ww) = \sup \{\dotp{ \vv}{\ww } ,\; \vv \in A\}, \qquad \ww \in \RR^d.
\]
Let $T_0=T_0(C,\kk)$ be the zonoid defined in Section~\ref{sec:limiting_zonoid}.

\begin{theorem}
    \label{thm:zono_shape}
    For all $\vv \in \RR^d$ and for all $\epsilon > 0$,
    \[
        \lim_{n\to+\infty}\mathrm Q_n\left[|h_{\frac{1}{n}T}(\vv) - h_{T_0}(\vv)| > \epsilon \right] = 0.
    \]
\end{theorem}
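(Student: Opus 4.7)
The plan is to first establish exponential concentration of $h_{\frac{1}{n}T}(\vv)$ around $h_{T_0}(\vv)$ under the Boltzmann distribution $\PProb_n$, and then transfer to $\mathrm Q_n$ using the weak local limit of Proposition~\ref{prop:zono_local}. The key observation is that the zonotope support function is linear in the multiplicities,
\[
    h_{T(\omega)}(\vv) = \sum_{\xx\in\PPrim\cap C} \omega(\xx)\,(\vv\cdot\xx)_+,
\]
so under $\PProb_n$ it is a sum of independent scalar multiples of the geometric variables $\omega(\xx)$.

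For the mean, expanding $\EE_n[\omega(\xx)] = \sum_{r\geq 1} e^{-r\beta_n\uu\cdot\xx}$, applying the primitive-lattice-point approximation of Proposition~\ref{prop:primitive_sums} and rescaling $\zz = r\beta_n\yy$ exactly as in the proof of Proposition~\ref{prop:central_limit}, one obtains
\[
    \frac{1}{n}\,\EE_n[h_T(\vv)] \longrightarrow \int_{C\cap C(\vv\geq 0)} e^{-\uu\cdot\zz}\,(\vv\cdot\zz)\,d\zz.
\]
The same layer-cake argument as in that proof rewrites the limit as $(d+1)!\int_{C(\uu\leq 1)\cap C(\vv\geq 0)}\vv\cdot\zz\,d\zz$, and the homothety relating $\uu=\uu(C,\lambda\kk)$ to $Q(C,\kk)$ (explained in the Remark after \eqref{eq:argmax}) identifies this with $h_{T_0}(\vv)$.

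For concentration, I would use a Chernoff bound. By independence and the geometric moment generating function,
\[
    \log\EE_n\!\left[e^{s(h_T(\vv)-\EE_n[h_T(\vv)])}\right] = \sum_{\xx\in\PPrim\cap C}\!\left(\log\frac{1-e^{-\beta_n\uu\cdot\xx}}{1-e^{-\beta_n\uu\cdot\xx+s(\vv\cdot\xx)_+}} - s(\vv\cdot\xx)_+\,\EE_n[\omega(\xx)]\right),
\]
which is well-defined provided $s(\vv\cdot\xx)_+<\beta_n\uu\cdot\xx$ for every contributing $\xx$. A second-order Taylor expansion in $s$ combined with another application of Proposition~\ref{prop:primitive_sums} shows that this cumulant generating function is $O(s^2 n^{(d+2)/(d+1)})$ uniformly for $|s|$ below a threshold of order $\beta_n$. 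Markov's inequality applied to $e^{\pm s(h_T(\vv)-\EE_n[h_T(\vv)])}$ and optimized at $s=\Theta(n^{-1/(d+1)})$ therefore yields
\[
    \PProb_n\!\bigl[|h_{\frac{1}{n}T}(\vv)-h_{T_0}(\vv)|>\epsilon\bigr] \leq e^{-c(\epsilon)\,n^{d/(d+1)}}
\]
for some $c(\epsilon)>0$, provided $n$ is large enough that the mean from the previous step is within $\epsilon/2$ of $h_{T_0}(\vv)$.

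The deduction for $\mathrm Q_n$ is then immediate: the bound $\mathrm Q_n[E]\leq\PProb_n[E]/\PProb_n[\XX=n\kk]$ combined with Proposition~\ref{prop:zono_local} yields $\mathrm Q_n[\,|h_{\frac{1}{n}T}(\vv)-h_{T_0}(\vv)|>\epsilon\,]\leq e^{-c(\epsilon)n^{d/(d+1)}+o(n^{d/(d+1)})}\to 0$. I expect the main obstacle to be the Chernoff argument itself: one must verify that the MGF remains finite at the chosen tilt $s=\Theta(n^{-1/(d+1)})$ (which may require truncating out primitive vectors where the ratio $(\vv\cdot\xx)/(\uu\cdot\xx)$ is extreme) and control the quadratic term of the Taylor expansion uniformly in $\xx$ and $s$ via Proposition~\ref{prop:primitive_sums}. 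The mean identification and the transfer step are essentially repeats of techniques already used in the proofs of Propositions~\ref{prop:central_limit} and~\ref{prop:zono_local}.
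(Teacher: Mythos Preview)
Your proposal is correct and matches the paper's approach: exponential concentration of $h_T(\vv)$ under $\PProb_n$ via a Chernoff bound, then transfer to $\mathrm Q_n$ using Proposition~\ref{prop:zono_local}. The paper packages the Chernoff step a little more cleanly by recognizing $\EE_n[e^{\theta h_T(\vv)}]=Z_{C(\vv\geq 0)}(\beta_n\uu-\theta\vv)/Z_{C(\vv\geq 0)}(\beta_n\uu)$ and then invoking the second-order analysis of Proposition~\ref{prop:central_limit} on the subcone $C(\vv\geq 0)$, but this is the same computation you wrote out term by term. Your concern about truncation is unnecessary: since $\uu\in C^\circ$ and $C$ is pointed, the ratio $(\vv\cdot\xx)_+/(\uu\cdot\xx)$ is bounded on $C\setminus\{0\}$ by some $M=M(\uu,\vv)$, so the MGF is finite for all $|s|<\beta_n/M$, and this range already contains the optimal tilt $s=\Theta(\epsilon\,n^{-1/(d+1)})$.
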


\begin{proof}
Again, the proof of this theorem is based on the probabilistic model of Section~\ref{ssec:model}. It depends on the fact that, conditional on the event $\XX = n\kk$, the distribution of a random zonotope under $\PProb_n$ is uniformly distributed on the set $\T(C,n\kk)$. In particular,
\begin{align*}
	\mathrm Q_n\left[ |h_{\frac{1}{n}T}(\vv) - h_{T_0}(\vv)| > \epsilon \right] & = \PProb_n\left[|h_{\frac{1}{n}T}(\vv) - h_{T_0}(\vv)| > \epsilon \mid \XX = n\kk\right]\\
        & \leq \frac{\PProb_n\left[|h_{\frac{1}{n}T}(\vv) - h_{T_0}(\vv)| > \epsilon\right]}{\PProb_n\left[\XX = n\kk\right]}.
\end{align*}
Hence, we only need to prove that the right-hand side of this inequality goes to $0$. It is natural to consider only $\vv \in \BB^{d-1}$, the unit sphere of $\RR^d$. Using Proposition~\ref{prop:zono_local}, we see that the theorem follows from the result of the next proposition.
\end{proof}

\begin{proposition}\label{thm:zono_shape2}
    For all $\epsilon > 0$ there exists $c > 0$ such that for all $\vv \in \BB^{d-1}$ and for all $n$ large enough,
    \[
        \PProb_n\left[ |h_{\frac{1}{n}T}(\vv) - h_{T_0}(\vv)| > \epsilon \right] \leq \exp\{-c\, n^{d/(d+1)}\}.
    \]
\end{proposition}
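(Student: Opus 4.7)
The support function of a Minkowski sum of segments is the sum of the support functions, so for $T=T(\omega)$,
\[
h_T(\vv)=\sum_{\xx\in\PPrim\cap C}\omega(\xx)\,(\vv\cdot\xx)_+.
\]
Under $\PProb_n$ this is a sum of independent rescaled geometric random variables. The plan has two steps: compute $\EE_n[h_T(\vv)]$ to leading order in $n$, and then establish exponential concentration around that mean. Uniformity of the bounds in $\vv\in\BB^{d-1}$ needs to be tracked throughout.

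For the mean, I would expand $\EE_n[\omega(\xx)]=\sum_{r\ge 1}e^{-r\beta_n\uu\cdot\xx}$ and proceed exactly as in the proof of Proposition~\ref{prop:central_limit}: Proposition~\ref{prop:primitive_sums} replaces each sum over $\PPrim\cap C$ by $\zeta(d)^{-1}$ times the corresponding integral on $C$; the substitution $\xx\mapsto\xx/(r\beta_n)$ pulls out the factor $(r\beta_n)^{-(d+1)}$; and summing over $r$ gives
\[
\EE_n[h_T(\vv)] \;=\; \frac{\zeta(d+1)}{\zeta(d)\,\beta_n^{d+1}}\int_C e^{-\uu\cdot\xx}(\vv\cdot\xx)_+\,d\xx \;+\; o(n).
\]
The layer-cake identity together with the homothety $\xx=t\yy$, used verbatim as at the end of the proof of Proposition~\ref{prop:central_limit}, rewrites the integral as $(d+1)!\int_{C(\uu\le 1)\cap\{\vv\cdot\yy\ge 0\}}\vv\cdot\yy\,d\yy$; by the definition~\eqref{eq:argmax} of $\ta(\vv)$, normalised as in the Remark so that $\ta(\vv)=\kk$ on $C^{\circ}$, this equals $h_{T_0}(\vv)$. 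The choice of $\beta_n$ then yields $\EE_n[h_T(\vv)]=n\,h_{T_0}(\vv)+o(n)$.

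For the concentration, set $a_\xx=(\vv\cdot\xx)_+$ and apply a Bernstein-type inequality to the sum of independent centred sub-exponential variables $a_\xx(\omega(\xx)-\EE_n\omega(\xx))$. A second-moment calculation parallel to the mean (the ``omitted'' variance computation in the proof of Proposition~\ref{prop:central_limit}) gives $\Var_n(h_T(\vv))=O(n^{(d+2)/(d+1)})$. Since only $\xx$ with $\beta_n\uu\cdot\xx=O(1)$ contribute significantly, each $a_\xx$ is $O(\|\xx\|)=O(\beta_n^{-1})=O(n^{1/(d+1)})$, which bounds the sub-exponential scale of each summand. Plugging deviation $\epsilon n/2$ into Bernstein, both the sub-Gaussian regime ($s^2/V\asymp\epsilon^2 n^{d/(d+1)}$) and the sub-exponential regime ($s/K\asymp\epsilon\, n^{d/(d+1)}$) produce a tail of the required form $\exp(-c\,n^{d/(d+1)})$.

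The main obstacle will be uniformity in $\vv\in\BB^{d-1}$. For $\vv\in -C^{\circ}$ the statement is trivial, since $h_T(\vv)$ and $h_{T_0}(\vv)$ both vanish identically; elsewhere the integrands $(\vv\cdot\xx)_+^{j}\,e^{-\beta_n\uu\cdot\xx}$ are dominated by $\|\xx\|^{j}\,e^{-\beta_n\uu\cdot\xx}$, a function depending only on $C$ and $\uu$. This should allow the error terms in Step~1 (replacement of primitive sums by integrals, and truncation of the $r$-series at $r\asymp\beta_n^{-1}$) and the constants in the Bernstein bound of Step~2 to be taken uniform in $\vv$, though this uniformity requires some care for directions $\vv$ for which $h_{T_0}(\vv)$ is close to zero.
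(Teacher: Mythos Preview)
Your proposal is correct and follows essentially the same route as the paper: express $h_T(\vv)$ as a sum of independent geometric variables, identify the mean via Proposition~\ref{prop:primitive_sums} (the paper does this by applying the argument of Proposition~\ref{prop:central_limit} to the sub-cone $C(\vv\ge 0)$, which amounts to the same computation), and then establish exponential concentration. The only difference is packaging: the paper runs the Chernoff method directly, writing the moment generating function in closed form as a ratio of partition functions $Z_{C(\vv\ge 0)}(\beta\uu-\theta\vv)/Z_{C(\vv\ge 0)}(\beta\uu)$ and Taylor-expanding, whereas you invoke a Bernstein-type bound. Both give the same exponent $c\,n^{d/(d+1)}$ for the same reason.

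One imprecision worth fixing: your justification of the sub-exponential scale $K=O(n^{1/(d+1)})$ is not that ``each $a_\xx=O(\beta_n^{-1})$'' (this is false for large $\xx$), but rather that the MGF of $a_\xx\omega(\xx)$ is finite for $t<\beta_n\,\uu\cdot\xx/a_\xx$, and since $\uu\in\Int C^\circ$ one has $\uu\cdot\xx\ge c_0\|\xx\|\ge c_0\,a_\xx$ uniformly, so this radius is at least $c_0\beta_n$ for every $\xx$ and every $\vv\in\BB^{d-1}$. This also delivers the uniformity in $\vv$ that you flag at the end.
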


\begin{proof}
 It is evident that for a zonotope $T(\omega)$ generated by the vectors $\xx \in \PPrim\cap C$ with multiplicities $\omega(\xx)$ the support function is
    \[
        h_{T}(\vv) = \sum_{\xx \in \PPrim \cap C} \omega(\xx)\,\dotp{ \xx}{\vv}\; \indicator{\dotp{\xx}{\vv} \geq 0} = \sum_{\xx \in \PPrim \cap C(\vv \geq 0)} \omega(\xx)\,\dotp{\xx}{\vv}.
    \]
    In the proof of Proposition~\ref{prop:central_limit}, we could replace $C$ with the smaller cone $C(\vv \geq 0)$ and we would obtain
    \[
        \frac{1}{n}\,\EE_n[h_{T}(\vv)] = \dotp{\Lambda_{C(\vv \geq 0)}(\uu)}{\vv} + O(n^{-1/(d+1)}) = h_{T_0}(\vv) + o(1).
    \]
    Since $\frac{1}{n}h_{T} = h_{\frac{1}{n}T}$, this already shows that the limit shape appears in expectation.

    We will now bound the probability for a large deviation from the mean using the so-called Chernoff method. Let us consider the exponential generating function of $h_T(\vv)$
    \[
        \EE[e^{\theta h_{T}(\vv)}] =
        \prod_{x \in \PPrim \cap C(\vv \geq 0)} \frac{1-e^{-\dotp{\beta\uu}{\xx}}}{1-e^{-\dotp{(\beta\uu - \theta \vv)}{ \xx}}}
            = \frac{Z_{C(\vv \geq 0)}(\beta \uu - \theta \vv)}{Z_{C(\vv \geq 0)}(\beta\uu)}
    \]
    which is well defined for all $\theta$ small enough with respect to $\beta$.
    Consider now the centered random variable $Y = h_{\frac{1}{n}T}(\vv) - \EE[h_{\frac{1}{n}T}(\vv)]$.
    The central limit part of Proposition~\ref{prop:central_limit} applies to the cone $C(\vv\ge 0)$ with $\kk'=-\nabla \Lambda_{C(\vv\ge 0)}(\uu)$. Thus by second order Taylor approximation, we obtain for some constant $c(\uu,\vv) > 0$ involving the Hessian matrix of $\Lambda_{C(\vv \geq 0)}$ at $\uu$,
    \[
        \log \EE [e^{\theta Y}] \sim c(\uu,\vv)\,\frac{\theta^2}{2} n^{-d/(d+1)}
    \]
    as long as $\frac{\theta}{n}$ goes to $0$. But for all $\theta > 0$, the Markov inequality yields
    \[
        \PProb_n[Y > \epsilon] = \PProb_n[\theta Y \geq \theta\epsilon] \leq e^{-\theta\epsilon} \EE[e^{\theta Y}]
    \]
    This bound is approximately optimized for $\theta = c(\uu,\vv)^{-1}\, \epsilon\, n^{d/(d+1)}$ and it leads to
    \[
        \PProb_n[Y > \epsilon] \leq \exp\left(-\frac{1}{2}c(\uu,\vv)^{-1} \epsilon^2 n^{d/(d+1)}(1+o(1))\right)
    \]
    A similar bound holds for $\PProb_n[-Y > \epsilon]$, hence for
    $\PProb_n[|Y| > \epsilon]$.
    The conclusion follows.
\end{proof}

\subsection{The number of zonotopes}
\label{ssec:number_zonotopes}

This section is devoted to the proof of Theorem~\ref{th:size}, which states that
\[
    \frac{\log p(C,n\kk)}{n^{\frac{d}{d+1}}} \xrightarrow[n\to\infty]{} c_d\,q(C,\kk),\qquad \text{where } c_d = \sqrt[d+1]{\frac{\zeta(d+1)}{\zeta(d)}(d+1)!} 
\]
and where $q(C,\kk)$ is defined in Section~\ref{sec:limiting_zonoid} as the minimal volume of a cap of the cone $C$ having $\kk$ as its center of gravity.

The starting point of the proof is the equation~\eqref{eq:proba} established earlier which links the number $\log p(C,\kk)$ with the probability distribution $\PProb_n$ in the following way:
\[
  \log p(C,\kk) = \log Z_n + n\beta_n \uu\cdot k + \log \PProb_n[\XX = n\kk].
\]
The value of $\beta_n$ was given explicitly by \eqref{eq:param} and it shows that $n\beta_n$ is of order $n^{d/(d+1)}$. On the other hand, we know by Proposition~\ref{prop:zono_local} that $\log \PProb_n[\XX = n\kk]$ is negligible compared to $n^{d/(d+1)}$.

It only remains to estimate the term $\log Z_n$. This is done by using the series expansion \eqref{eqn:logZ} and applying Proposition~\ref{prop:primitive_sums} from the appendix in the same lines as the first part of the proof of Proposition~\ref{prop:central_limit}. This leads to
\[
  \log Z_n = \frac{\zeta(d+1)}{\zeta(d)} \frac{1}{\beta_n^d}(\Lambda(\uu)+o(1)),
\]
which is again of order $n^{d/(d+1)}$. A simple computation based on the definition of $\beta_n$ and of the constant $c_d$ yields therefore
\[
  \frac{\log p(C,n\kk)}{n^{\frac{d}{d+1}}} \xrightarrow[n\to\infty]{} \frac{c_d}{\sqrt[d+1]{(d+1)!}} (\Lambda(\uu)+\uu\cdot \kk).
\]

The final step is to express $\Lambda(\uu) + \uu\cdot \kk$ in terms of $q(C,\kk)$. Recall that $\uu$ is defined by \eqref{eq:param} and that we have shown in Proposition~\ref{prop:central_limit} that
\[
  \kk  = (d+1)! \int_{C(\uu \leq 1)} \xx\,d\xx= \int_C \xx e^{-\uu\cdot\xx}\,d\xx.
\]
The computation of $\Lambda(\uu)+\uu\cdot\kk$ is thus simply a matter of integral calculus. Using the fundamental theorem of calculus and the Fubini-Tonelli theorem, we obtain:
\begin{align*}
\Lambda(\uu) + \uu\cdot \kk &= \int_C (1+\uu \cdot \xx)e^{-\uu\cdot x}\,d\xx\\
& = \int_C \int_{\uu\cdot \xx}^\infty t\,e^{-t} \,dt\,d\xx\\
& = \int_0^\infty  t e^{-t}\Vol C(\uu \leq t)\,dt\\
& = (d+1)!\Vol C(\uu \leq 1) 
\end{align*}
Finally, the definition \eqref{eq:param} of $\uu$ and the considerations of volumes in Section~\ref{sec:limiting_zonoid} show that
\[
  \Vol C(\uu \leq 1) = q(C,\tfrac{1}{(d+1)!}\kk) = \left[\frac{1}{(d+1)!}\right]^{\frac{d}{d+1}} q(C,\kk).
\]
The proof is complete.

\medbreak
\noindent
\textit{Remark.} Although we did not use it directly, it appears above that $\uu$ is actually the unique minimizer of the quantity $\Lambda(\vv) + \vv\cdot\kk$ for $\vv \in C^\circ$. This fact can be used to prove Gigena's Theorem~\ref{th:gigena}.

\section{Number of vertices of a random zonotope}
\label{sec:number_vertices}

In this section we determine the order of magnitude of the expected number of faces of each dimension and towers of a random zonotope.
A tower (or flag) of a zonotope $T$ is a chain $F_0 \subset F_1 \subset \ldots \subset F_{d-1}$, where $F_i$ is an $i$-dimensional face of $T$.
We denote by $f_i(T)$ the number of $i$-dimensional faces and by $F(T)$ the number of towers of $T$.

\begin{theorem}\label{thm:typicalZonotopeVertices}
There are constants $c_1, c_2$ not depending on $n$ such that
\begin{align*}
\lim_{n \rightarrow + \infty} Q_n\left\{T\in \T(C,n\mathbf{k}) \mid c_1 < \frac{f_i(T)}{ n^{d(d-1)/(d+1)}} < c_2\right\} &= 1, \\
\lim_{n \rightarrow + \infty} Q_n\left\{\T \in \T(C,n\mathbf{k}) \mid c_1 < \frac{F(T)}{ n^{d(d-1)/(d+1)}} < c_2 \right\} &= 1.
\end{align*}
\end{theorem}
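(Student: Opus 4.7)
My plan is to prove both halves of the theorem as events of probability at least $1-\exp(-c\,n^{d/(d+1)})$ under $\PProb_n$ and transfer them to $\mathrm Q_n$ via Proposition~\ref{prop:zono_local}, following the template of the proof of Theorem~\ref{thm:zono_shape}. For the upper bound, I use the deterministic fact that the normal fan of the zonotope $T(W)$ coincides with the central hyperplane arrangement $\{\ww^\perp : \ww \in W\}$, so the classical face counts for central arrangements in $\RR^d$ yield $f_i(T(W)) = O(|W|^{d-1})$ and $F(T(W)) = O(|W|^{d-1})$ for all $0\leq i\leq d-1$, with constants depending only on $d$. Since $|G(T)| = \sum_{\xx \in \PPrim \cap C} \indicator{\omega(\xx) > 0}$ is, under $\PProb_n$, a sum of independent Bernoulli variables with mean $\mu_n = \Theta(n^{d/(d+1)})$ by Lemma~\ref{l:generator}, a standard Chernoff bound gives $\PProb_n[|G(T)| > 2\mu_n] \leq \exp(-c\,n^{d/(d+1)})$, which yields the upper bound after transfer.

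For the matching lower bound, I want to show that with overwhelming probability $G(T)$ is in ``weakly general position'', so that Zaslavsky's generic face-count formulas give $f_i(T) = \Theta(|G(T)|^{d-1})$ and $F(T)=d!\,f_0(T)$ (the latter because the zonotope is then simple). The main input is the extension of Lemma~\ref{l:generatorinH} to a $k$-dimensional linear subspace $L$ through the origin: the same one-sided cubature argument yields $\EE_n[|G(T)\cap L|] = O(n^{k/(d+1)})$, uniformly in $L$. A Chernoff bound on the Bernoulli sum $|G(T)\cap L|$, combined with a union bound restricted to subspaces spanned by $\leq d-1$ generators of $G(T)$ (only polynomially many in $n$ thanks to the a priori bound on $|G(T)|$), produces uniform control of the relevant $|G(T)\cap L|$. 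Plugging this into the count of expected ordered linearly dependent $k$-tuples of generators shows, for each $k \leq d$, that only an $o(1)$ fraction of $k$-tuples can be linearly dependent.

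The main obstacle is converting ``few linearly dependent tuples'' into the sharp lower bound $f_{d-1}(T) \geq c\,|G(T)|^{d-1}$, since this face count equals twice the number of \emph{distinct} $(d-1)$-dimensional subspaces spanned by subsets of $G(T)$, and a naive divide-by-maximum-multiplicity argument loses too much, giving only $\Omega(n^{(d-1)/(d+1)})$ rather than the target $\Omega(n^{d(d-1)/(d+1)})$. To overcome this I plan to compute $\EE_n[f_{d-1}(T)]$ directly by summing $\PProb_n[\operatorname{rank}(G(T)\cap H)=d-1]$ over $(d-1)$-dimensional subspaces $H$, via a Poisson-summation argument on the restricted cone $C\cap H$ analogous to Proposition~\ref{prop:primitive_sums}, and then concentrating around the expectation by a Chernoff/Laplace-transform argument modelled on Proposition~\ref{thm:zono_shape2}, applied to appropriate linear statistics of the generators. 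Analogous reasoning for intermediate face dimensions $i$ uses the generic $i$-face formula $2\binom{m}{i}\sum_{k=0}^{d-1-i}\binom{m-i-1}{k}$.
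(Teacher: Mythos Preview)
Your upper bound is correct and arguably cleaner than the paper's: you use the zonotope--arrangement duality together with the classical cell-count bound $f_i(\mathcal A^c)=O(m^{d-1})$ for a central arrangement of $m$ hyperplanes in $\RR^d$, and then concentrate $|G(T)|$ around $\mu_n=\Theta(n^{d/(d+1)})$ by Chernoff. The paper instead quotes the lattice-polytope inequality $F(P)=O((\Vol P)^{(d-1)/(d+1)})$ of \cite{barany_larman_1998,konyagin1984estimation} and combines it with the volume bound $\Vol T=O(n^d)$; both routes give the same $O(n^{d(d-1)/(d+1)})$.

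For the lower bound, you have correctly set up the ``good event'' (many generators, no hyperplane containing more than an $\varepsilon$-fraction of them) and shown it has $\PProb_n$-probability $1-\exp(-c\,n^{d/(d+1)})$; this is exactly the paper's Proposition~5.2. But your plan for converting this into $f_i(T)=\Omega(m^{d-1})$ has a genuine gap. The generators will \emph{not} be in general position with high probability---for any fixed rational hyperplane $H$ through the origin there are $\Theta(n^{(d-1)/(d+1)})$ generators in $H$, so many $d$-tuples are dependent and the zonotope is far from simple; the generic Zaslavsky formula $2\binom{m}{i}\sum_k\binom{m-i-1}{k}$ therefore does not apply. Your fallback---computing $\EE_n[f_{d-1}(T)]$ by summing $\PProb_n[\operatorname{rank}(G(T)\cap H)=d-1]$ over all rational hyperplanes $H$ and then concentrating---is not worked out: the sum is over an infinite set with no evident cutoff, and $f_{d-1}(T)$ is not a linear statistic of the independent variables $\omega(\xx)$, so the Chernoff/Laplace machinery of Proposition~\ref{thm:zono_shape2} does not apply to it.

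The missing ingredient is \emph{Beck's theorem}: for an arrangement of $m$ hyperplanes in $\RR^{d-1}$, either some point lies on $cm$ of them, or the arrangement has $\Omega(m^{d-1})$ vertices. Intersecting the central arrangement $\{\ww^\perp:\ww\in G(T)\}$ with a generic affine hyperplane gives an affine arrangement of $m=|G(T)|$ hyperplanes in $\RR^{d-1}$, and a point of that arrangement lies on $k$ hyperplanes precisely when $k$ generators lie in a common hyperplane through the origin. Your good event excludes the first alternative for $\varepsilon<c$, so Beck yields $\Omega(m^{d-1})$ vertices. A short ``bottom-vertex'' sweep then shows that every vertex is the lowest point of at least one $j$-cell for each $1\le j\le d-1$, giving $\Omega(m^{d-1})$ cells in every dimension, hence $f_i(T)=\Omega(m^{d-1})$ for all $i$ and the tower bound follows. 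This replaces your unfinished expectation/concentration step entirely.
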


As $Q_n$ is the uniform distribution on $\T(C,n\kk)$ the above theorem says that, as $n\to \infty$, for the overwhelming majority of the zonotopes in $\T(C,n\kk)$  the face numbers $f_i(T)$ and $F(T)$ are of order $n^{d(d-1)/(d+1)}$. It is shown in \cites{barany_larman_1998, konyagin1984estimation} that for any integer lattice polytope $P$ with $\vol P>0$, $$F(P) = O\left((\vol P)^{(d-1)/(d+1)}\right).$$ The same upper bound for each $f_i(P)$ follows immediately. Combined with the previous result on the volume of a random zonotope, this establishes the upper bounds of Theorem \ref{thm:typicalZonotopeVertices}. It only remains to show the lower bounds.

\subsection{Generators of a random zonotope}

Let $\T_\varepsilon(C,n\mathbf{k})$ consist of all zonotopes $T\in \T(C,n\mathbf{k})$ such that
$$|G(T)| \geq (1-\varepsilon)\zeta(d)^{-1}\Lambda_C(\beta \mathbf{u}),$$
and the number of generators of $T$ that are contained in any single hyperplane is at most $\varepsilon \Lambda_C(\beta \mathbf{u})$.
Our goal in this section is to prove all except for a minute fraction of the zonotopes in $\T(C,n\mathbf{k})$ belong to $\T_\varepsilon(C,n\mathbf{k})$.

\begin{proposition}\label{thm:typicalZonotopeGenerators}
$$\lim_{n \rightarrow + \infty} Q_n[\T_\varepsilon(C,n\mathbf{k})] = 1.$$
\end{proposition}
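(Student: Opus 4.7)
The plan is to establish an exponentially small upper bound under the product distribution $\PProb_n$ and then transfer it to $\mathrm Q_n$ through Proposition~\ref{prop:zono_local}: since $\PProb_n[\XX = n\kk] = e^{-o(n^{d/(d+1)})}$, it will be enough to show that
\[
  \PProb_n[T \notin \T_\varepsilon(C,n\kk)] \leq \exp(-c\, n^{d/(d+1)})
\]
for some $c = c(\varepsilon) > 0$. Under $\PProb_n$ the indicators $\indicator{\omega(\xx) > 0}$ for $\xx \in \PPrim \cap C$ are independent Bernoulli variables with parameters $p_\xx = e^{-\beta_n \uu\cdot\xx}$, so each count of the form $|G(T)\cap S|$ is an independent Bernoulli sum and is amenable to Chernoff estimates.

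For the lower bound on $|G(T)|$, Lemma~\ref{l:generator} gives $\mu_n := \EE_n[|G(T)|] = \zeta(d)^{-1}\Lambda_C(\beta_n\uu)(1+O(\beta_n)) = \Theta(n^{d/(d+1)})$, so the standard lower-tail Chernoff bound yields
\[
  \PProb_n[|G(T)| < (1-\varepsilon)\mu_n] \leq \exp(-\tfrac12\varepsilon^2 \mu_n) \leq \exp(-c_1 n^{d/(d+1)}).
\]

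The uniform hyperplane control requires a truncation. Fix a large constant $A > 0$ and let $\PPrim_M = \PPrim \cap C \cap \{\xx : \uu\cdot\xx \leq M\}$ with $M = A n^{d/(d+1)}/\beta_n$. Estimating the tail sum $\sum_{\uu\cdot\xx > M} e^{-\beta_n\uu\cdot\xx}$ via Proposition~\ref{prop:primitive_sums} and applying Markov's inequality yields $\PProb_n[G(T) \not\subset \PPrim_M] \leq \exp(-c_2 A n^{d/(d+1)})$, while $|\PPrim_M| = O(n^d)$. For a linear subspace $L \subset \RR^d$ through the origin of dimension $k \in \{1,\dots,d-1\}$, the same method as in Lemma~\ref{l:generatorinH} applied in dimension $k$ gives $\EE_n[|G(T)\cap L|] = O(n^{k/(d+1)})$. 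With $K = \varepsilon \Lambda_C(\beta_n\uu) = \Theta(n^{d/(d+1)})$, the ratio $K/\EE_n[|G(T)\cap L|]$ is at least of order $n^{1/(d+1)}$, and the upper-tail Chernoff bound for independent Bernoullis produces
\[
  \PProb_n[|G(T)\cap L| \geq K] \leq \exp\!\bigl(-c_3\,\varepsilon\, n^{d/(d+1)} \log n\bigr).
\]
Any hyperplane $H$ with $|G(T)\cap H| \geq K$ satisfies $G(T)\cap H \subseteq L := \mathrm{span}(G(T)\cap H)$, a subspace of dimension at most $d-1$; on the event $G(T) \subset \PPrim_M$, $L$ is spanned by primitive vectors from $\PPrim_M$. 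The number of such candidate $L$ (across all dimensions $k \leq d-1$) is at most $O(|\PPrim_M|^{d-1}) = O(n^{d(d-1)})$, a polynomial factor easily absorbed by the Chernoff bound above. A union bound completes the estimate under $\PProb_n$, and Proposition~\ref{prop:zono_local} does the transfer to $\mathrm Q_n$.

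The main difficulty is the uniform control in the third step: the family of hyperplanes through the origin is uncountable, so one must combine a truncation (to reduce to polynomially many candidate subspaces $L$) with the fact that one is in the large-deviation regime $K/\EE_n[|G(T)\cap L|] \to \infty$, which produces an extra $\log n$ factor in the Chernoff exponent and allows the union bound to survive.
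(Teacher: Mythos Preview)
Your proof is correct and follows the same strategy as the paper: show the two concentration statements under $\PProb_n$ via Chernoff bounds for the independent Bernoulli variables $\indicator{\omega(\xx)>0}$, then transfer to $\mathrm Q_n$ through Proposition~\ref{prop:zono_local}. The only real difference is in how the union bound over hyperplanes is executed. The paper observes that the only relevant hyperplanes are those spanned by elements of $G(T)$, and there are at most $|G(T)|^{d-1}=O(n^{d(d-1)/(d+1)})$ of those; its per-hyperplane Chernoff bound $\exp(-c\,n^{d/(d+1)})$ already beats this polynomial. You instead truncate to the deterministic set $\PPrim_M$ so that the union is over a fixed family of at most $O(n^{d(d-1)})$ subspaces, and compensate for the larger polynomial with the sharper large-deviation bound $\exp(-c\,n^{d/(d+1)}\log n)$. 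Your version is a bit more explicit (it avoids the informal step of ``union-bounding over a random family'') and cleanly handles the case where $\operatorname{span}(G(T)\cap H)$ has dimension $<d-1$, which the paper glosses over. One small simplification: you do not actually need the refinement $\EE_n[|G(T)\cap L|]=O(n^{k/(d+1)})$ for $k$-dimensional $L$; since every such $L$ lies in some hyperplane through the origin, Lemma~\ref{l:generatorinH} already gives the uniform bound $O(n^{(d-1)/(d+1)})$, and that is all your Chernoff step uses.
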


\begin{proof} This is quite easy. We rely on the two lemmas in Subsection~\ref{ssec:number_generators} and on the following rough form of the Chernoff bounds:
\begin{proposition}[Chernoff bounds]
Let $X_1, \ldots, X_n$ be independent Bernoulli random variables, let $X = X_1 + \ldots + X_n$, and let $\mu = \EE[X]$.
For any $\delta > 0$,
\begin{align*}
\PProb[X \geq (1+\delta)\mu] & \leq e^{-\delta^2 \mu / 3}, & 0 < \delta < 1, \\
\PProb[X \geq (1+\delta)\mu] & \leq e^{-\delta \mu / 3}, & 1 \leq \delta, \\
\PProb[X \leq (1-\delta)\mu] & \leq e^{-\delta^2 \mu / 2},&  0 < \delta < 1.
\end{align*}
\end{proposition}

By Lemma~\ref{l:generator} $\mu= \EE[|G(T)|]$ is of order $n^{d/(d+1)}$, and so
an application of the Chernoff bound gives, that for any $0 < \varepsilon < 1$
$$\PProb_n\left[|G(T)| \leq (1-\varepsilon) \mu\right] \leq  e^{-\varepsilon^2\mu/2} =
\exp\{-cn^{d/(d+1)}\}$$
where $c>0$ is a constant independent of $n$.

Next, we bound the probability that a hyperplane $H$ with $0\in H$ contains more than a constant fraction of the expected number of generators of a zonotope in $\T(C,n\mathbf{k})$. According to Lemma~\ref{l:generatorinH}, $\mu_H =\EE[|G(T)\cap H|]\le c n^{(d-1)/(d+1)}$ with $c$ not depending on $H$ and $n$. Applying the Chernoff bound with $\gamma > 1$, we have
$$\PProb_n \left[|G(T) \cap H| \geq (1+\gamma)\mu_{H} \right] \leq e^{-\gamma \mu_H/3}.$$
In particular,
$$\PProb_n \left[|G(T) \cap H| \geq \varepsilon \Lambda_C(\beta \mathbf{u})\right] \leq \exp\{-cn^{d/(d+1)}\}$$
for some $c>0$ depending on $\varepsilon$.

Note finally that the total number of hyperplanes (containing the origin) spanned by the generators of a zonotope with $O(n^{d/(d+1)})$ generators is bounded by $O(n^{d(d-1)/(d+1)})$.
Since this is polynomial in $n$, a union bound proves Proposition \ref{thm:typicalZonotopeGenerators}.
\end{proof}

\subsection{Faces of a random zonotope}

In this section, we show that the zonotopes in $\mathcal{T}_\varepsilon(C, n\mathbf{k})$ have at least the number of $i$-dimensional faces  prescribed by Theorem \ref{thm:typicalZonotopeVertices}; the same lower bound for towers is an immediate corollary.

We rely on the following well-known duality between zonotopes and hyperplane arrangements.
An arrangement of hyperplanes is a finite collection of affine hyperplanes, together with their subdivision of $\mathbb{R}^d$ into relatively open cells. A central hyperplane arrangement is the decomposition of $\mathbb{R}^d$ induced by linear hyperplanes.
For a $d$-dimensional zonotope $T$ with generators $\{\mathbf{v}_1, \ldots, \mathbf{v}_m\}$, let $\mathcal{A}^c(T)$ be the central arrangement of hyperplanes with this set of normal vectors.
The number of $k$-dimensional cells of $\mathcal{A}^c(T)$ is equal to the number of $d-k$ faces of $T$ (see, e.g., \cite{bjorner1999oriented}*{Prop. 2.2.2}).
Let $\mathcal{A}(T)$ be the $(d-1)$-dimensional arrangement of affine hyperplanes obtained as the intersection of $\mathcal{A}^c(T)$ with a generic hyperplane. Clearly, the number of $(k-1)$-dimensional cells of $\mathcal{A}(T)$ is a lower bound on the number of $k$-dimensional cells of $\mathcal{A}(T)$ for $k \in [1,d]$. We note that twice this lower bound is an upper bound on the number in question as the cells of $\mathcal{A}^c(T)$ come in pairs $C,-C$ except $C=\{0\}$.

\begin{proposition}\label{thm:vertices} There is a constant $c$ independent of $n$ such that for all $\epsilon \in (0, c)$ and $T \in \T_{\epsilon} (C, n\mathbf{k})$, we have $f_i(T) = \Omega(n^{d(d-1)/(d+1)})$ for each $0 \leq i \leq d-1$.
\end{proposition}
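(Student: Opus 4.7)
Plan. Let $m := |G(T)|$. By Lemma~\ref{l:generator} and the first condition defining $\T_\epsilon$, $m = \Theta(n^{d/(d+1)})$, so the target bound $f_i(T) = \Omega(n^{d(d-1)/(d+1)})$ is equivalent to $f_i(T) = \Omega(m^{d-1})$. By the duality recalled just before the proposition, $f_i(T)$ is bounded below by the number of $(d-1-i)$-dimensional cells of the affine arrangement $\A(T)$ of $m$ hyperplanes in $\RR^{d-1}$, so it suffices to show that $\A(T)$ has $\Omega(m^{d-1})$ cells of every dimension $j \in \{0,\ldots,d-1\}$.

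The decisive step is the vertex case $j = 0$. A vertex of $\A(T)$ corresponds to a $1$-flat $L$ of the central arrangement $\A^c(T)$, equivalently to a linear hyperplane $H = L^\perp \subset \RR^d$ containing at least $d-1$ of the generators, and its multiplicity (the number of hyperplanes of $\A(T)$ through it) equals $|G(T) \cap H|$ exactly. The second condition defining $\T_\epsilon$ therefore translates into: every point of $\RR^{d-1}$ lies on at most $c\epsilon m$ hyperplanes of $\A(T)$, for some constant $c = c(C,\uu)$. What is required is a higher-dimensional Beck-type incidence bound: any arrangement of $m$ hyperplanes in $\RR^{d-1}$ in which every point has multiplicity at most $\alpha m$ has $\Omega(m^{d-1})$ vertices, provided $\alpha$ is below a threshold depending only on $d$. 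This threshold is precisely what pins down the constant $c$ in the statement.

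I expect this Beck-type inequality to be the main obstacle. A naive double count of point--hyperplane incidences (incidences divided by the maximum multiplicity per point) yields only a lower bound that is constant in $m$, far short of $m^{d-1}$. In the planar case $d-1 = 2$ this is the classical theorem of Beck, obtained via Szemer\'edi--Trotter. For $d-1 \ge 3$ I would argue inductively: slice $\A(T)$ by a generic affine hyperplane $W \subset \RR^{d-1}$, verify that the point-multiplicity condition descends to the arrangement induced on $W$ (a high-multiplicity point in the slice lifts to a high-multiplicity line, which intersects $\A^c(T)$ in a high-multiplicity vertex), and invoke the bound in dimension $d-2$.

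To obtain the lower bound for cells of dimension $j \ge 1$, I would use the Zaslavsky-type identity expressing the number of $j$-cells of $\A(T)$ as a sum over $j$-flats $F$ of the number of regions of the induced arrangement $\A(T)^F$ inside $F$. The number of $j$-flats of $\A(T)$ is $\Omega(m^{d-1-j})$ by the same Beck-type principle applied to $(d-1-j)$-subsets of hyperplanes, and for each such flat the induced arrangement in $F \cong \RR^j$ inherits the density condition, hence by induction on $d$ has $\Omega(m^j)$ regions. Multiplying yields $\Omega(m^{d-1})$ $j$-cells of $\A(T)$ for every $j$, and translating back through the zonotope--arrangement duality gives $f_i(T) = \Omega(m^{d-1}) = \Omega(n^{d(d-1)/(d+1)})$ for every $0 \le i \le d-1$, as claimed.
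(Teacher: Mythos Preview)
Your overall strategy --- reduce to counting cells of $\A(T)$, translate the $\T_\epsilon$ hypothesis into a bound on point--multiplicities in $\A(T)$, and invoke a Beck-type theorem --- is exactly right and coincides with the paper's approach. But two of your supporting arguments have genuine gaps.

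\medskip
\textbf{The inductive proof of Beck.} The higher-dimensional Beck theorem you need (an arrangement of $m$ hyperplanes in $\RR^{d-1}$ with no point on more than $cm$ of them has $\Omega(m^{d-1})$ vertices) is a known result of Beck; the paper simply cites it. Your proposed inductive proof does not close. When you slice $\A(T)$ by a generic hyperplane $W\cong\RR^{d-2}$, a vertex of the induced arrangement is a point lying on at least $d-2$ of the $H_i\cap W$, hence on at least $d-2$ of the $H_i$; in $\RR^{d-1}$ this determines a \emph{line} (a $1$-flat of $\A(T)$), not a vertex. So the inductive hypothesis yields $\Omega(m^{d-2})$ lines in $\A(T)$, not $\Omega(m^{d-1})$ vertices, and there is no obvious way to bridge the missing factor of $m$ from a single slice. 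You should cite Beck rather than reprove it.

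\medskip
\textbf{Higher-dimensional cells.} Your route via Zaslavsky's identity requires, for each $j$, a lower bound $\Omega(m^{d-1-j})$ on the number of $j$-flats of $\A(T)$ and then a region count $\Omega(m^j)$ inside each such flat. Neither step is free: the flat count is a separate Beck-type statement (not the same as the vertex case), and the region count inside a $j$-flat again needs something like a bottom-vertex argument. The paper's argument here is much shorter and avoids all of this: once you know $\A(T)$ has $\Omega(m^{d-1})$ vertices, pick a generic ``up'' direction and observe that every vertex $p$ is the bottom vertex of at least one $j$-cell for each $1\le j\le d-1$ (take $d-1$ hyperplanes through $p$ with independent normals; the cell above all of them, and each of its bounding faces through $p$, has $p$ as its lowest point). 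Hence the number of $j$-cells is at least the number of vertices, and you are done for every $j$ simultaneously.
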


\begin{proof}
If a single line $\{\lambda \vv \mid \lambda \in \RR\}$ is incident to $k$ hyperplanes of $\mathcal{A}^c(T)$, then the vector $\vv$ is orthogonal to $k$ generators of $T$. Since at most $(\varepsilon/(1-\varepsilon))|G(T)|$ of the generators of $T$ lie in the hyperplane $\vv^{\perp}$ (for any $\vv$), the same bound applies to the maximum number of hyperplanes of $\mathcal{A}(T)$ that are incident to any single point.
Hence, we can apply the following theorem of Beck \cite{beck} to bound the number of vertices of $\mathcal{A}(T)$.

\begin{lemma}
There is a constant $c$ depending on dimension such that the following holds.
Let $\mathcal{A}$ be an arrangement of $m$ hyperplanes in $\mathbb{R}^{d-1}$.
Then either
\begin{enumerate}
\item a single vertex of $\mathcal{A}$ is incident to $cm$ hyperplanes of $\mathcal{A}$, or
\item the total number of vertices in $\mathcal{A}$ is $\Omega(m^{d-1})$.
\end{enumerate}
\end{lemma}

Since $\mathcal{A}(T)$ is an arrangement of $n^{d/(d+1)}$ hyperplanes, this implies that, for $\varepsilon$ sufficiently small, the number of vertices of $\mathcal{A}(T)$ is $\Omega(n^{d(d-1)/(d+1)})$.

Designate a generic direction $\mathbf{w}$ in $\mathbb{R}^{d-1}$ to be ``up''.
Clearly, each $(d-1)$-dimensional region of $\mathcal{A}(T)$ that is bounded from below has a bottom vertex.
We claim that each vertex of $\mathcal{A}(T)$ is the bottom vertex of at least one region.
Let $p$ be an arbitrary vertex of $\mathcal{A}(T)$.
At least $d-1$ hyperplanes having linearly independent normals are incident to $p$. Let $S$ be an arbitrary set of $d-1$ such hyperplanes.
At least one $(d-1)$ dimensional region $R$ of $\mathcal{A}(T)$ is above each of these hyperplanes with respect to $\mathbf{w}$, and $p$ is the bottom vertex of each such region.
In addition, $p$ is the bottom vertex of each $j$-dimensional region that is contained in a hyperplane of $S$ and that bounds $R$, for $1 \leq j \leq d-2$.
Hence, the number of $j$-dimensional regions that are bounded below in $\mathcal{A}(T)$ is at least the number of vertices of $\mathcal{A}(T)$, which is $\Omega(n^{d(d-1)/(d+1)})$.

Since the $j$-dimensional regions of $\mathcal{A}(T)$ are in bijection with the $(d-j)$-dimensional faces of $T$, this completes the proof of the proposition.
\end{proof}

\subsection{A short digression}

This proof method has an interesting consequence about hyperplane (or rather subspace) arrangements that seems to be new. Recall first that if $\mathcal{A}$ is a central arrangement of $m$ general position hyperplanes in $\mathbb{R}^d$, then $f_i(\mathcal{A})$ is known precisely for all $i$:
\[
f_i(\mathcal{A})=\sum_{k=d-i}^k {k \choose d-i}{m \choose k}
\]
see for instance  Buck~\cite{buck_1943} and Zaslavski~\cite{zaslavsky_facing_1975}.  Moreover,  this function is an upper bound on the number of $i$-dimensional cells of every central arrangement of $m$ linear hyperplanes in $\mathbb{R}^d$. Define $\mathcal{A}^c_r$ as the collection of all $d-1$-dimensional subspaces of the form $\mathbf{z}^{\perp}$ where $\mathbf{z} \in \ZZ^d$ has Euclidean length at most $r$.
We could not find the following result anywhere in the literature or in folklore.

\begin{theorem} The number of $i$-dimensional cells  for all $i=1,\ldots,d$ and the number of towers of $\mathcal{A}_r$ is $\Theta(r^{d(d-1)})$.
\end{theorem}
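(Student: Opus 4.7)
The plan is to mimic the proof of Proposition~\ref{thm:vertices}, regarding $\mathcal{A}^c_r$ as the central arrangement attached to a (virtual) zonotope whose generator set is the whole set of primitive lattice vectors of norm at most $r$. A routine estimate of primitive points in the Euclidean ball of radius $r$ (in the spirit of Proposition~\ref{prop:primitive_sums}) gives $m := |\mathcal{A}^c_r| = \Theta(r^d)$, since primitive vectors have density $1/\zeta(d)$ in $\ZZ^d$ and come in antipodal pairs.

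For the upper bound, any central arrangement of $m$ hyperplanes in $\RR^d$ satisfies $f_i = O(m^{d-1})$ for every $i\in\{1,\ldots,d\}$: each $i$-cell lies in some $i$-dimensional intersection subspace, of which there are at most $\binom{m}{d-i}$, and each of these is cut by the remaining hyperplanes into $O(m^{i-1})$ cells by induction on the dimension. The number of towers is at most $d!\,f_d$. Substituting $m=\Theta(r^d)$ yields the desired $O(r^{d(d-1)})$.

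For the matching lower bound I follow Proposition~\ref{thm:vertices} step by step. Intersect $\mathcal{A}^c_r$ with a generic affine hyperplane $H$ to obtain an arrangement $\mathcal{B}$ of $m$ affine hyperplanes in $H\cong\RR^{d-1}$, and apply the Beck-type lemma quoted in the proof of that proposition. I must check Beck's hypothesis that no point of $H$ is incident to more than $cm$ hyperplanes of $\mathcal{B}$. A point $p\in H$ corresponds to the line $L=\RR p$ through the origin, and the hyperplanes of $\mathcal{B}$ through $p$ are exactly the slices $\mathbf{z}^{\perp}\cap H$ for those $\mathbf{z}\in\PPrim$ with $\|\mathbf{z}\|\le r$ and $\mathbf{z}\perp L$. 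This count is $O(r^{d-1})$ uniformly in $L$, because $\ZZ^d\cap L^\perp$ is a lattice of rank at most $d-1$. Since $r^{d-1}=m^{(d-1)/d}=o(m)$, Beck's lemma delivers $\Omega(m^{d-1})$ vertices of $\mathcal{B}$, and the ``bottom-vertex of a region'' argument from Proposition~\ref{thm:vertices} upgrades this to $f_i(\mathcal{A}^c_r)=\Omega(r^{d(d-1)})$ for every $i\in\{1,\ldots,d\}$, with the same lower bound for the number of towers (by picking an arbitrary flag inside each top-dimensional cell).

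The main obstacle is the uniform incidence bound used just above: it must hold over \emph{every} line $L$ through the origin, not merely the rational ones. This reduces to the elementary but essential observation that any proper linear subspace of $\RR^d$ meets $\ZZ^d$ in a sublattice of rank at most $d-1$, so its lattice points of norm $\le r$ number $O(r^{d-1})$ with a constant depending only on $d$; the extremal case is a rational hyperplane $\mathbf{n}^{\perp}$ with $\mathbf{n}$ a primitive lattice vector, where the bound $\Theta(r^{d-1}/\|\mathbf{n}\|)=O(r^{d-1})$ is sharp. Every remaining ingredient is either a direct transcription of the zonotope argument or a standard fact about central hyperplane arrangements.
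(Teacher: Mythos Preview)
Your argument is correct and is exactly the paper's approach: slice $\mathcal{A}^c_r$ by a generic hyperplane, invoke Beck's theorem on the resulting affine arrangement, and propagate to all cell dimensions via the bottom-vertex argument of Proposition~\ref{thm:vertices}; the paper in fact just writes that ``the previous proof applies word by word,'' whereas you spell out the one point it leaves implicit, namely the uniform $O(r^{d-1})$ bound on the number of generators orthogonal to any fixed line (your sublattice-rank observation).

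One small slip: the inequality ``towers $\le d!\,f_d$'' is valid only for \emph{simple} central arrangements, and $\mathcal{A}^c_r$ is certainly not simple (many hyperplanes share a common line). The upper bound on towers is still $O(m^{d-1})$, but it comes instead from the fact, alluded to just before the theorem via the Buck--Zaslavsky formula, that a small generic perturbation of the arrangement can only increase every face and flag count; in the perturbed simple arrangement each $d$-cell is a simplicial cone contributing exactly $d!$ flags, and there your bound does apply.
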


\begin{proof} It follows from the result of Beck~\cite{beck} cited above that the number of one-dimensional cells of $\mathcal{A}^c_r$ is $\Theta(r^{d(d-1)})$. The same estimate is also implied by Theorem 3 of \cite{barany_harcos_2001}.

Let $\mathcal{A}_r$ be the $(d-1)$-dimensional arrangement of affine hyperplanes obtained as the intersection of $\mathcal{A}^c_r$ with a general position hyperplane $H$ the same way as above. The previous proof applies word by word.
\end{proof}

\section{Integral zonotopes in convex bodies}\label{sec:convex_bodies}

As it is mentioned in the introduction, for $d=2$ Theorem~\ref{th:lshape} follows from the results of \Barany{}~\cite{barany'97}. In fact, more is true. We set up the question more generally. Assume $K \subset \RR^d$ a convex body (i.e., convex compact set with non-empty interior) and write $\PP(K,n)$ for the collection of all convex $\frac 1n \ZZ^d$-lattice polytopes contained in $K$, $\PP(K,n)$ is a finite set. It is proved in \cite{barany'97}  that, when $d=2$, the polygons in $\PP(K,n)$ have a limit shape as $n\to \infty$:

\begin{theorem}\label{th:lshap} For every convex body $K$ in $\RR^2$ there is a convex body $K_0\subset K$ such that, as $n\to \infty$, the overwhelming majority of the polygons in $\PP(K,n)$ are very close to $K_0$. More precisely for every $\epsilon>0$
\[
\lim_{n \to \infty}\frac{\operatorname{card}\left\{P \in \PP(K,n) \mid \dist (P,K_0)>\epsilon\right\}}{\operatorname{card}\PP(K,n)}=0.
\]
\end{theorem}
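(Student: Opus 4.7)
My approach follows and extends to arbitrary planar convex bodies the 2-dimensional strategy of \cite{barany'97}, now cast in the probabilistic language of Section~\ref{ssec:model}. The first step is a partition description of $\PP(K,n)$: each polygon $P\in\PP(K,n)$ is determined, up to the choice of a single reference vertex $v_0\in\tfrac{1}{n}\ZZ^2\cap K$, by a multiplicity function $\omega:\PPrim\cap\RR^2\to\ZZ_+$ of finite support satisfying the closure condition $\XX(\omega)=0$, and such that the polygonal curve obtained from $v_0$ by traversing the primitive vectors in angular order remains inside $K$. The key move is then to slice $\partial P$ at its four coordinate-extremal vertices $N,E,S,W$, which cuts $\partial P$ into four monotone convex lattice arcs. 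Each arc is contained in an axis-aligned rectangle $R_i\subseteq K$ whose two opposite corners are two consecutive pivots, and after a reflection and translation bringing its endpoints to $(0,0)$ and $(a_i,b_i)$, is in natural bijection with a zonotope in $\T(\RR^2_+,(a_i,b_i))$. Theorems~\ref{th:lshape} and~\ref{th:size} applied via Example~1 with $d=2$ then supply both the local limit shape — one half of the zonoid of Example~1, a parabolic arc of the form $\sqrt{1-x/a_i}+\sqrt{1-y/b_i}=1$ in the rectangle's coordinates, with horizontal tangent at one pivot and vertical tangent at the other — and the exponential count $\exp\bigl(c_2\,q(\RR^2_+,(a_i,b_i))\,n^{2/3}\bigr)$, where $q(\RR^2_+,(a,b))=\sqrt[3]{9ab/2}$.

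A candidate limit shape $K_0\subseteq K$ is therefore parametrized by the pivot quadruple $(N,E,S,W)$. Since the four monotone arcs become asymptotically independent once the pivots are fixed, the total count factorizes, and the dominant contribution to $\operatorname{card}\PP(K,n)$ comes from the pivot quadruple that maximizes $\sum_{i=1}^4 q(\RR^2_+,(a_i,b_i))$ subject to the four parabolic arcs lying inside $K$. This is a finite-dimensional strictly concave optimization (strict concavity comes from that of $(ab)^{1/3}$, whose Hessian is negative-definite on $\RR^2_+$), with a unique maximizer $(N^*,E^*,S^*,W^*)$ determining $K_0$ as the convex body bounded by the four corresponding parabolic arcs. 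Tangential matching at each pivot is automatic: at $N^*$ and $S^*$ (resp.\ $E^*$ and $W^*$) both adjacent arcs have horizontal (resp.\ vertical) tangent, forced by the fact that the pivots are coordinate-extremal; hence $\partial K_0$ is a $C^1$ closed convex curve.

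Concentration of $\mathrm Q_n$ around $K_0$ then follows from two ingredients. First, applying Proposition~\ref{thm:zono_shape2} inside each rectangle $R_i$ shows that, conditional on fixed pivots, monotone arcs whose Hausdorff distance to the parabolic limit exceeds $\epsilon$ carry relative weight at most $\exp(-c\,\epsilon^2 n^{2/3})$. Second, the strict concavity of the variational objective at $(N^*,E^*,S^*,W^*)$ converts a pivot deviation of size $\epsilon$ into an exponential penalty of the same order $\exp(-c\,\epsilon^2 n^{2/3})$. A polynomial union bound over an $\epsilon$-net of pivot positions in $K$ is negligible by comparison. The main obstacle is precisely this passage to uniformity: one must treat random pivots and random arc shapes simultaneously, controlling the joint deviations at matched scales and dealing with the fact that the rectangles $R_i$ themselves are random (depending on $P$). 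The resolution is to condition on the pivots first, obtaining arc concentration via Proposition~\ref{thm:zono_shape2}, then to remove the conditioning using the pivot concentration, where the total probability of large deviations is $\exp(-c\,\epsilon^2 n^{2/3})$ times a polynomial factor.
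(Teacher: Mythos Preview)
First, note that the paper does not supply its own proof of Theorem~\ref{th:lshap}: the result is quoted from \cite{barany'97}. What the paper does record is the characterization of $K_0$ as the (unique) convex subset of $K$ of maximal affine perimeter.

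Your proposal has a genuine gap. After fixing the pivots $N,E,S,W$ you put each monotone arc in bijection with $\T(\RR^2_+,(a_i,b_i))$ and invoke Theorems~\ref{th:lshape} and~\ref{th:size}. But those results enumerate \emph{all} integral zonotopes in the cone with the prescribed endpoint; they carry no information about the additional constraint that the corresponding convex path must stay inside $K$. (Relatedly, your claim $R_i\subseteq K$ is false in general.) For bodies $K$ where this constraint bites, your variational problem is simply the wrong one. Take $K$ the closed unit disk. By the affine-perimeter characterization (and rotational symmetry plus uniqueness) the true $K_0$ is $K$ itself, so $\partial K_0$ is a circle, not a union of parabolic arcs. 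Your candidate arc from $N=(0,1)$ to $E=(1,0)$ is $\sqrt{1-x}+\sqrt{1-y}=1$, which passes through $(3/4,3/4)$, a point at distance $\tfrac{3}{4}\sqrt 2>1$ from the origin and hence outside $K$. Your side condition ``parabolic arcs lying inside $K$'' then forces the pivots strictly inward and produces a body strictly smaller than the disk --- the wrong limit shape and the wrong leading exponent in the count.

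The fix is exactly what \cite{barany'97} does and what your sketch omits: between two pivots one must count convex $\tfrac1n\ZZ^2$-paths \emph{constrained to lie in $K$}, and the limiting arc is the affine-arclength maximizer among convex arcs in $K$ joining those endpoints. In general $\partial K_0$ is a concatenation of parabolic arcs (in $\Int K$) and pieces of $\partial K$; your appeal to Theorem~\ref{th:lshape} covers only the former. As written, the argument is valid only for those $K$ (e.g.\ axis-parallel rectangles) where the unconstrained parabolic arcs happen to fit --- which is essentially the special case the paper handles in Section~\ref{sec:convex_bodies} for zonotopes in the cube.
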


The distinguishing property of $K_0$ is that its affine perimeter (for the definition see \cite{barany'97}) is maximal among all convex subsets of $K$. It is also shown there that such a $K_0$ is unique. The analogous question in higher dimension is wide open. Even the case when $K$ is the cube $[-1,1]^3$ is not known.

\medskip 
{\bf Question 1.} Assume $K \subset \RR^d$ is a convex body and $d>2$. Is there a limit shape to the convex polytopes in $\PP(K,n)$?

\medskip
Define, similarly, $\F(K,n)$ as the collection of all convex $\frac 1n \ZZ^d$-lattice zonotopes contained in $K$, $\F(K,n)$ is again a finite set.

\medskip
{\bf Question 2.} Assume $K \subset \RR^d$ is a convex body and $d\ge 2$. Is there a limit shape to the convex zonotopes in $\F(K,n)$?

\medskip
We can answer this question when $d=2$. Informally stating, for every convex body $K$ in the plane there is a zonoid $K^0$ contained in $K$ such that, as $n\to \infty$, the overwhelming majority of the polygons in $\F(K,n)$ are very close to $K^0$. The {\bf proof} follows the method of \cite{barany'97} so we only give a sketch. Note that in the plane a zonoid is always a centrally symmetric convex body and vice versa. The first thing to show is that $K$ contains a centrally symmetric convex body that maximizes the affine perimeter among all centrally symmetric convex subsets of $K$. This follows from the fact that the affine perimeter is upper semi-continuous. The next step is to prove that the maximizer is unique. Here one uses the fact that, for all $t\in [0,1]$, if $A,B \subset K$ are centrally symmetric convex bodies, then so is $tA+(1-t)B$ which is a subset of $K$, again. The affine perimeter is a concave functional, that is, writing $AP(S)$ for the affine perimeter of the convex body $S$ we have
\[
AP(tA+(1-t)B) \ge tAP(A)+(1-t)AP(B). 
\]
This shows that if $A$ and $B$ are both maximizers, then so is $tA+(1-t)B$ for all $t \in [0,1]$. Next one proves that $A \subset B$ cannot happen. Then $A$ and $B$ are both maximizers not only for $K$ but also for the convex body $\Conv (A \cup B)$. Then $A=B$ follows more or less the same way as in \cite{barany'97}. The proof that the unique maximizer is the limit shape in $\PP(K,n)$ goes analogously to  \cite{barany'97}. 

\medskip
We can also answer Question 2 when the convex body $K$ is the cube $Q=Q^d=[-1,1]^d$ for all $d\ge 2$. To describe the limiting zonoid in this case, set $t=\sqrt[d+1]{(d+1)!2^{1-d}}$ and define the octahedron 
\[
O^d=t\Conv \{\pm e_1,\ldots,\pm e_d\}. 
\]
With this notation the support function of the limiting zonoid $Q_0$ is given by
\[
h_{Q_0}(\vv)=\frac 12 \int_{O^d}|\xx\cdot \vv|d\xx.
\]

\begin{theorem} With the above notation for every $d\ge 2$ and for every $\eta>0$
\[
\lim_{n \to \infty}\frac{\operatorname{card}\left\{P \in \F(Q^d,n) \mid \dist (P,Q_0)>\eta\right\}}{\operatorname{card}\F(Q^d,n)}=0.
\]
\end{theorem}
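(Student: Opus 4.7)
The plan is to adapt the Boltzmann-style proof of Section~\ref{ssec:model} to the setting where the constraint is containment in the cube rather than a prescribed endpoint. After rescaling so that we work with integer-vertex zonotopes inside $[-n,n]^d$, I would fix a generic direction $\xi \in \RR^d$ and the open half-space $H^+ = \{\vv \in \RR^d : \vv \cdot \xi > 0\}$, then check that every lattice zonotope $T' \subset [-n,n]^d$ has a unique representation $T' = \vv_0 + \sum_{\xx \in \PPrim \cap H^+} \omega(\xx)[0,\xx]$, where $\omega : \PPrim \cap H^+ \to \ZZ_+$ has finite support and $\vv_0 \in \ZZ^d$ is the $\xi$-minimal vertex of $T'$. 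Containment $T' \subset [-n,n]^d$ becomes the validity condition $-n + A_j(\omega) \le (\vv_0)_j \le n - B_j(\omega)$ for each $j$, where $A_j = \sum_\xx \omega(\xx)\max(0,-x_j)$ and $B_j = \sum_\xx \omega(\xx)\max(0,x_j)$; in particular the $j$-th width $W_j(\omega) = A_j + B_j = \sum_\xx \omega(\xx)|x_j|$ must satisfy $W_j(\omega) \le 2n$.

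I would then define a Boltzmann distribution $\PProb_n$ on pairs $(\omega,\vv_0)$ in which $\vv_0$ is uniform on $[-2n,2n]^d \cap \ZZ^d$ and, independently, each $\omega(\xx)$ is geometric with parameter $e^{-\beta_n(|x_1|+\cdots+|x_d|)}$. Because $\sum_\xx \omega(\xx)(|x_1|+\cdots+|x_d|) = \sum_j W_j(\omega)$, the overall weight is proportional to $\exp(-\beta_n \sum_j W_j)$ and depends only on the shape. Calibrating
\[
\beta_n = \left(\frac{\zeta(d+1)}{\zeta(d)} \cdot \frac{2^{d-2}}{n}\right)^{1/(d+1)}
\]
makes $\EE_{\PProb_n}[W_j] \to 2n$ for each $j$. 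The analysis of Propositions~\ref{prop:central_limit}, \ref{prop:zono_local}, and \ref{thm:zono_shape2} now carries over with only notational changes: Lyapunov's CLT yields concentration of $(W_1,\ldots,W_d)$ around $(2n,\ldots,2n)$ with fluctuations of order $n^{(d+2)/(2d+2)}$, forcing $\PProb_n[\text{valid}]$ to be bounded below by a positive constant; an integral computation using Proposition~\ref{prop:primitive_sums}, the substitution $\yy = \beta_n\xx$, and the identity $\int_0^\infty \tau^{d+1}e^{-\tau}\,d\tau = (d+1)!$ shows that
\[
\frac{1}{n}\EE_{\PProb_n}\!\left[\tfrac12\sum_\xx \omega(\xx)\,|\xx\cdot\vv|\right] \longrightarrow \frac{1}{2^d}\int_{\RR^d} |\yy\cdot\vv|\,e^{-|y_1|-\cdots-|y_d|}\,d\yy,
\]
and a layer-cake change of variables identifies the right-hand side with $h_{Q_0}(\vv)$ through precisely the defining relation $t^{d+1} = (d+1)!\,2^{1-d}$. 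A Chernoff estimate in the style of Proposition~\ref{thm:zono_shape2} then upgrades convergence of the mean to exponential concentration at rate $\exp(-c\,n^{d/(d+1)})$; moreover, since conditional on validity the midpoint of the range of each $(\vv_0)_j$ equals $(A_j - B_j)/2$, one sees that $\vv_0$ automatically compensates the drift of the generators, so the typical zonotope is centered near the origin of the cube.

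The final step is to pass from $\PProb_n[\,\cdot\mid\text{valid}\,]$ back to the uniform distribution on $\F(Q^d,n)$. Unlike Section~\ref{ssec:model}, where conditioning on the single equality $\XX = n\kk$ gave the uniform distribution on the nose, here the two distributions differ by the Radon--Nikodym factor $\exp(-\beta_n \sum_j W_j)$, which depends only on the scalar statistic $\sum_j W_j$. Under $\PProb_n[\,\cdot\mid\text{valid}\,]$ this statistic concentrates at $2nd$ with fluctuations of order $n^{(d+2)/(2d+2)}$, so the reweighting varies by at most $\exp(O(\beta_n n^{(d+2)/(2d+2)})) = \exp(O(n^{d/(2d+2)}))$, which is negligible compared with the support-function concentration rate $\exp(-c\,n^{d/(d+1)})$. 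The delicate point -- and the main obstacle -- is to show the same concentration of $\sum_j W_j$ under the uniform distribution itself; this is a Legendre--Fenchel argument in which the cube's coordinate-reflection symmetry is essential, because it forces the $d$ Lagrange multipliers dual to the width constraints to coincide, so a one-parameter Boltzmann family suffices and the cross-polytope $O^d$ emerges as the unique optimizer of the underlying variational problem. Finally, pointwise convergence of support functions at a dense set of directions implies Hausdorff convergence of the bodies, completing the proof.
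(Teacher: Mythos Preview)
Your approach is genuinely different from the paper's. The paper does \emph{not} run a fresh Boltzmann analysis for the cube; instead it splits $\RR^d$ into the $2^d$ coordinate orthants $C^\eps$, decomposes each zonotope as $T=\sum_\eps T^\eps$ with $T^\eps$ a zonotope in $C^\eps$ having endpoint $\kk^\eps$, and reduces everything to the already-proven Theorems~\ref{th:lshape} and~\ref{th:size}. The only new work is an optimization: by Theorem~\ref{th:size}, $\log|\F(\kk)|\sim c_d' S(\kk)\,n^{d/(d+1)}$ with $S(\kk)=\sum_\eps\bigl(\prod_i |k_i^\eps|\bigr)^{1/(d+1)}$, and a strengthened AM--GM inequality (Rado's inequality) shows that $S(\kk)$ is maximized precisely at $k_i^\eps=\eps_i 2^{1-d}$, with a quadratic penalty for deviations. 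The limit shape then follows orthant by orthant from Theorem~\ref{th:lshape}. Reusing the cone results is the point of the paper's route.

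Your direct-Boltzmann approach gets the calibration right --- the choice of $\beta_n$ and the identification of $h_{Q_0}$ via the layer-cake identity both check out --- but the step you yourself flag as ``the main obstacle'' is a genuine gap, not a routine transfer. Conditioning $\PProb_n$ on validity does \emph{not} produce the uniform law on $\F(Q^d,n)$: the Radon--Nikodym factor $e^{\beta_n\sum_j W_j}$ ranges, over valid configurations, through a multiplicative window of size $e^{\Theta(n^{d/(d+1)})}$ (since $\sum_j W_j$ can lie anywhere in $[0,2nd]$ and $\beta_n n=\Theta(n^{d/(d+1)})$), which is exactly the scale of your Chernoff bounds. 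So the naive transfer cancels to $O(1)$ and proves nothing. To rescue it you must show that under the \emph{uniform} measure each $W_j$ concentrates near $2n$; that is a large-deviations statement equivalent to the paper's optimization step, and the one-line appeal to ``Legendre--Fenchel plus symmetry'' does not supply it. Symmetry tells you the optimal multipliers coincide, but you still need to prove that the free energy attains its constrained maximum at the boundary $W_j=2n$ with a quantitative penalty away from it --- precisely what the strengthened AM--GM accomplishes in the orthant parametrization. A smaller slip: $\PProb_n[\text{valid}]$ is not bounded below by a positive constant, since conditionally on $\omega$ the probability is $\prod_j(2n-W_j+1)_+/(4n+1)$, which is only polynomially small when each $W_j\approx 2n$; this alone would not break the argument, but it is worth correcting.
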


\begin{proof} The coordinate hyperplanes $x_i=0$ for $i=1,\ldots,d$ split $\RR^d$ into $2^d$ open cones; each corresponds to a sign pattern $\eps=(\eps_1,\ldots,\eps_d)\in \{-1,1\}^d$. Extend the cone with sign pattern $\eps$ to another cone, to be denoted by $C^{\eps}$ so that these $2^d$ cones form a partition of $\RR^d$. This is clearly possible.

As we have seen in Section~\ref{sec:strict_partitions} for every zonotope $T \in \PP(Q,n)$ there is a unique multiset $W\subset \frac 1n \PPrim$ such that $T=T(W)$. Define $W^{\eps}=W\cap C^{\eps}$ for every $\eps$ and set $\kk^{\eps}=\sum_{w \in W^{\eps}}w$. Then $T^{\eps}=T(W^{\eps})$ is a zonotope in the cone $C^{\eps}$ with endpoint $\kk^{\eps}$ and containing the origin.  Moreover, $T=\sum_{\eps}T^{\eps}$. As $T\subset Q$, the conditions
\begin{equation}\label{eq:cond}
\sum_{\eps: \eps_i=1}k_i^{\eps}\le 1 \mbox{ and }\sum_{\eps: \eps_i=-1}k_i^{\eps}\ge -1
\end{equation}
are satisfied for every $i=1,\ldots,d$.

A fixed $\kk^{\eps}\in C^{\eps}\cap \frac 1n \ZZ$ defines a family $\F(\kk^{\eps})$ consisting of all zonotopes in $C^{\eps}$ that contain the origin and whose endpoint is $\kk^{\eps}$.
Given $\kk^{\eps} \in C^{\eps}$ for all $\eps$ we write $\kk=(\kk^{\eps}: \mbox{ all } \eps)$ which is in fact a $d\times 2^d$ matrix. Let $\F(\kk)$ be the collection of all zonotopes $T \in \F(Q,n)$ for which the endpoint of $T^{\eps}$ is $\kk^{\eps}$ for all $\eps$. 

After these preparation here is the plan of the proof. We show that the contribution to $|\F(Q,n)|$ of $\F(\kk)$ is minute unless $|k_i^{\eps}- \eps_i 2^{1-d}|$ is very small for all $\eps$ and all $i$. This will prove that for most zonotopes $T \in \F(Q,n)$ and for all $\eps$ the endpoint of $T^{\eps}$ is very close to the vector $(\eps_12^{1-d},\ldots,\eps_d2^{1-d})$. The limit shape of these zonotopes is very close to the limiting zonoid of the zonotopes in $C^{\eps}$ with endpoint $(\eps_12^{1-d},\ldots,\eps_d2^{1-d})$ which is known from Example 1. Finally, the sum of the limiting zonoids is $Q_0$ as one can check easily.

It follows from Theorem~\ref{th:size} and Example 1 that
\[
\log |\F(\kk^{\eps})|=c_d\left(\frac {(d+1)^d}{d!} \prod_1^d|k_i^{\eps}|\right)^{1/(d+1)}n^{d/(d+1)}(1+o(1)).
\]
Since $\log |\F(\kk)|=\sum_{\eps} \log |\F(\kk^{\eps})|$ we will have to deal with the expression
\[
S(\kk):= \sum_{\eps}\left(\prod_1^d|k_i^{\eps}|\right)^{1/(d+1)}. 
\]
Writing $c_d'=c_d\sqrt[d+1]{\frac {(d+1)^d}{d!}}$, we have
\[
\log \F(\kk)=c'_d S(\kk)n^{d/(d+1)}(1+o(1)).
\]

Our first task is to show that 
\begin{equation}\label{eq:upp}
S(\kk) \le \frac d{d+1}2^{2d/(d+1)}(1+2^{1-d})=:D_d.
\end{equation}

To see this it is convenient to define $k_0^{\eps}=2^{1-d}$. Then, using the inequality between the arithmetic and geometric means $2^d$ times and the conditions (\ref{eq:cond})
\begin{eqnarray*}
S(\kk)&=&2^{(d-1)/(d+1)}\sum_{\eps}\left(\prod_0^d|k_i^{\eps}|\right)^{1/(d+1)} \le 2^{(d-1)/(d+1)}\frac1{d+1}\sum_{\eps}\left(\sum_0^d|k_i^{\eps}|\right)\\
      &=&2^{(d-1)/(d+1)}\frac 1{d+1}\sum_0^d(\sum_{\eps}|k_i^{\eps}|) \le 2^{(d-1)/(d+1)}\frac 2{d+1}\sum_0^d(1+2^{1-d})\\
      &=& \frac d{d+1}2^{2d/(d+1)}(1+2^{1-d})=D_d.
\end{eqnarray*}      
Note that equality holds throughout if all $k_i^{\eps}=\pm 2^{1-d}$, so $\max S(\kk)=D_d$. 

Here $\kk^{\eps}$ can be chosen in at most $(n+1)^d$ ways, so $\kk$ can take at most $(n+1)^{d2^d}$ values. Then
\[
|\F(Q,n)| = \sum_{\kk}\exp\{c_d'S(\kk)n^{d/(d+1)}(1+o(1))\} \le \exp\{c_d'D_dn^{d/(d+1)}(1+o(1))\}. 
\]

We are to use a strengthening of the inequality between the arithmetic and geometric means. Let $t_1,\ldots,t_m$ be non-negative reals (not all zero) and let $A_m$ resp. $G_m$ denote their arithmetic and geometric mean. Then, for any $i,j$ in $\{1,\ldots,m\}$
\begin{equation}\label{eq:AG}
\frac {(t_i-t_j)^2}{2m^2A_m}\le A_m-G_m.
\end{equation}
For the reader's convenience we prove this inequality at the end of this section.

Assume next that $|k_j^{\eps}-\eps_j2^{1-d}|\ge \delta$ for some $\eps$ and for some $j=1,\ldots,d$ where $\delta>0$. Then using (\ref{eq:AG}) in the above application of the inequality between the arithmetic and geometric means in the term corresponding to this $\eps$ we have can add an extra term, namely
\begin{eqnarray*}
(\prod_0^d|k_i^{\eps}|)^{1/(d+1)} &\le& \frac1{d+1}\sum_{\eps}(\sum_0^d|k_i^{\eps}|)-\frac {(k_j^{\eps}-\eps_j2^{1-d})^2}{2(d+1)^2(1+2^{1-d})}\\
&\le&  \frac1{d+1}\sum_{\eps}(\sum_0^d|k_i^{\eps}|)- \frac {\delta^2}{3(d+1)^2}.
\end{eqnarray*}
Here we used the fact that $\sum_{j=0}^d k_j^{\eps}\le 1+2^{1-d}$. This implies that when $|k_i^{\eps}-\eps_i2^{1-d}|\ge \delta$ for some positive $\delta$, then 
\[
S(\kk) \le D_d-\frac {\delta^2}{3(d+1)^2}.
\]
This shows in turn that the contribution of the $\F(\kk)$ with all such $\kk$ to $|\F(Q,n)|$ is only 
\[
\exp\{c_d'(D_d-\delta^2/3(d+1)^2)n^{d/(d+1)}(1+o(1))\},
\]
much smaller than $|\F(Q,n)|$ as long as $\delta^2n^{d/(d+1)}$ is larger than any fixed positive constant, say $\eta^2$. Thus, for all $\delta > 0$, as $n\to\infty$, all but a small fraction of the zonotopes in $\F(Q,n)$ satisfy $|k_i^{\eps}-\eps_i2^{1-d}|\le \delta$ for all $\eps$ and all $i$. Theorem~\ref{th:lshape} and Example 1 show that, again, all but a small fraction of the zonotopes in $\F(\kk^{\eps})$ are very close to the limiting zonoid in $C^{\eps}$ corresponding to $\kk^{\eps}=(\eps_12^{1-d},\ldots,\eps_d2^{1-d})$.
\end{proof}

\medskip
\begin{proof}[Proof of \eqref{eq:AG}] We may assume that $i=1$ and $j=2$. Suppose $m\ge 3$ and let $A_{m-1}$ resp. $G_{m-1}$ denote the arithmetic and geometric mean of $t_1,\ldots,t_{m-1}$. Richard Rado~\cite{hardy_inequalities_1952}*{Theorem 60} proved the inequality
\[
(m-1)(A_{m-1}-G_{m-1})\le m(A_m-G_m),
\]
which is, of course, a strengthening of the inequality between the arithmetic and geometric means. Then for $m\ge 2$ 
\begin{eqnarray*}
m(A_m-G_m) &\ge& 2(A_2-G_2)= (\sqrt {t_1}-\sqrt {t_2})^2= \frac {(t_1-t_2)^2}{(\sqrt {t_1}+\sqrt {t_2})^2}\\
&\ge& \frac {(t_1-t_2)^2}{2(t_1+t_2)} \ge \frac {(t_1-t_2)^2}{2mA_m}.
\end{eqnarray*}
\end{proof}
 
\appendix
\section{Discrete sums and integrals on cones}

The aim of this section is to establish the following proposition.

\begin{proposition}
\label{prop:primitive_sums}
  Let $d \geq 3$ and let $\PPrim = \{\xx \in \ZZ^d \mid \gcd(x_1,\dots,x_d) = 1\}$ denote the set of primitive vectors.
Let $f \colon C \to \RR$ be continuously differentiable and positively homogeneous of degree $h$, that is to say $f(\lambda \xx) = \lambda^h f(\xx)$ for all $\xx \in C$ and $\lambda \geq 0$.
For every $\uu \in \Int(C^\circ)$,
    \begin{equation}
\label{eqn:cubature_visible}
\beta^{d+h}\;\sum_{\xx \in C \cap \PPrim} f(\xx) e^{-\beta \dotp{\uu }{\xx}} \underset{\beta\downarrow 0}{=} \frac{1}{\zeta(d)}\int_C f(\xx)e^{-\dotp{\uu}{ \xx}}d\xx + O(\beta).
      \end{equation}
\end{proposition}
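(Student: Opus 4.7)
Reduce the primitive-vector sum to one over all nonzero lattice points of $C$ by Möbius inversion, and then approximate the latter by a volume integral. Every $\xx \in (\ZZ^d \cap C) \setminus \{0\}$ factors uniquely as $\xx = k\yy$ with $k \in \ZZ_{\ge 1}$ and $\yy \in \PPrim \cap C$, so setting
\[
F(\beta) := \sum_{\yy \in \PPrim \cap C} f(\yy)\, e^{-\beta \dotp{\uu}{\yy}}, \qquad G(\beta) := \sum_{\xx \in (\ZZ^d \cap C)\setminus\{0\}} f(\xx)\, e^{-\beta\dotp{\uu}{\xx}},
\]
the homogeneity $f(k\yy)=k^h f(\yy)$ gives $G(\beta) = \sum_{k\ge 1} k^h F(k\beta)$. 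Since $\sum_{k \mid n}\mu(k) = \mathbf{1}_{n=1}$, this inverts to $F(\beta) = \sum_{k\ge 1} \mu(k)\, k^h\, G(k\beta)$.

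Next, apply a standard lattice cubature estimate (Lemma~\ref{lem:cubature}) to the smooth, exponentially decaying integrand $\xx \mapsto f(\xx) e^{-\beta\dotp{\uu}{\xx}}$ on $C$ to obtain
\[
G(\beta) = \int_C f(\xx)\, e^{-\beta\dotp{\uu}{\xx}}\, d\xx + O\bigl(\beta^{-(d+h-1)}\bigr) = \frac{I}{\beta^{d+h}} + O\bigl(\beta^{-(d+h-1)}\bigr),
\]
where $I := \int_C f(\xx) e^{-\dotp{\uu}{\xx}}\,d\xx$ is finite (the assumption $\uu \in \Int(C^\circ)$ ensures exponential decay), and the second equality comes from the substitution $\xx \mapsto \beta^{-1}\xx$ and the homogeneity of $f$.

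Substituting this asymptotic into the Möbius inversion gives
\[
F(\beta) = \frac{I}{\beta^{d+h}}\sum_{k\ge 1}\frac{\mu(k)}{k^d} + O\!\left(\beta^{-(d+h-1)}\sum_{k\ge 1}\frac{1}{k^{d-1}}\right).
\]
The identity $\sum_k \mu(k)/k^d = 1/\zeta(d)$ delivers the announced main term, while the hypothesis $d \ge 3$ is precisely what makes the error series $\sum k^{-(d-1)}$ converge. Multiplying through by $\beta^{d+h}$ yields \eqref{eqn:cubature_visible}.

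The main obstacle is producing the middle-display cubature bound uniformly in $\beta$ on the unbounded cone $C$ with a polynomial weight $f$ of degree $h$. The key leverage is that $\uu \in \Int(C^\circ)$ supplies a constant $c>0$ with $\dotp{\uu}{\xx} \ge c\|\xx\|$ throughout $C$, so the integrand decays like $\|\xx\|^h e^{-c\beta\|\xx\|}$; tiling $C$ by unit lattice cubes and summing the per-cube oscillation bound $O(\beta)\cdot \sup\!\bigl(|f|\,e^{-\beta\dotp{\uu}{\cdot}}\bigr)$ against this decay gives the stated $O(\beta^{-(d+h-1)})$, with boundary cubes meeting $\partial C$ contributing only to lower order. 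This uniformity is exactly what the threshold $d\ge 3$ exploits in the final resummation.
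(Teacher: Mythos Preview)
Your argument is correct and follows essentially the same route as the paper: reduce the primitive sum to the full-lattice sum by M\"obius inversion, invoke a cubature estimate $G(\beta)=I\beta^{-(d+h)}+O(\beta^{-(d+h-1)})$ (the paper's Corollary~\ref{cor:cubature_lattice}), and resum using $\sum_k\mu(k)k^{-d}=1/\zeta(d)$ with the error controlled by $\sum_k k^{-(d-1)}<\infty$ for $d\ge 3$. The only minor difference is that the paper derives the cubature bound on the unbounded cone by integrating Lemma~\ref{lem:cubature} over the truncations $\{\uu\cdot\xx\le t\}$ against $\beta e^{-\beta t}\,dt$ via Fubini, whereas you sketch a direct unit-cube tiling; both yield the same error exponent.
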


We start the proof by a standard lemma dealing with the approximation of integrals on a convex body by Riemann sums.

\begin{lemma}
\label{lem:cubature}
Let $L > 0$, and let $K$ be a compact convex subset of the hypercube $[-\frac{L}{2},\frac{L}{2}]^d$. For every Lipschitz continuous function $f \colon K \to \RR$ with Lipschitz constant $M > 0$,
\begin{equation}
\label{eq:cubature}
\left|\sum_{\xx \in K\cap \ZZ^d} f(\xx) - \int_K f(\xx)\,d\xx\right| \leq M\frac{\sqrt{d}}{2} L^d + 4 d! (L+1)^{d-1} \sup_K |f|.
\end{equation}
\end{lemma}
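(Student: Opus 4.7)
The plan is to compare the sum with the integral using the tiling of $\RR^d$ by unit cubes centered at lattice points, splitting the discrepancy into an interior part controlled by the Lipschitz bound and a boundary part controlled by the convexity of $K$.

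For $\xx\in\ZZ^d$ set $Q_\xx=\xx+[-\tfrac12,\tfrac12]^d$; the cubes tile $\RR^d$ and each has diameter $\sqrt d$. Partition the lattice points that meet $K$ into an interior set $\mathcal{I}=\{\xx\in\ZZ^d:Q_\xx\subseteq K\}$ and a boundary set $\mathcal{B}=\{\xx\in\ZZ^d:Q_\xx\cap K\neq\emptyset,\ Q_\xx\not\subseteq K\}$. Since $\mathcal{I}\subseteq K\cap\ZZ^d\subseteq\mathcal{I}\cup\mathcal{B}$, the error decomposes as
\[
\sum_{\xx\in K\cap\ZZ^d} f(\xx) - \int_K f\,d\yy
\;=\; \sum_{\xx\in\mathcal{I}}\Big[f(\xx)-\int_{Q_\xx} f\,d\yy\Big]
\;+\; \sum_{\xx\in(K\cap\ZZ^d)\setminus\mathcal{I}} f(\xx)
\;-\; \int_{K\setminus\bigcup_{\xx\in\mathcal{I}} Q_\xx} f\,d\yy.
\]
For the first sum the Lipschitz estimate gives $|f(\xx)-\int_{Q_\xx}f\,d\yy|\le M\sqrt d/2$, and $|\mathcal{I}|\le\Vol(K)\le L^d$ since the disjoint cubes $(Q_\xx)_{\xx\in\mathcal{I}}$ lie in $K\subseteq[-L/2,L/2]^d$; hence this sum contributes at most $M\tfrac{\sqrt d}{2}L^d$, matching the first term of \eqref{eq:cubature}. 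The other two pieces are each bounded by $|\mathcal{B}|\sup_K|f|$ (the sum directly; the integral because $K\setminus\bigcup_{\xx\in\mathcal{I}} Q_\xx\subseteq\bigcup_{\xx\in\mathcal{B}} Q_\xx$ has volume at most $|\mathcal{B}|$), giving a total boundary contribution of at most $2|\mathcal{B}|\sup_K|f|$.

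The only substantial step is thus the geometric estimate $|\mathcal{B}|\le 2\,d!\,(L+1)^{d-1}$, which I would prove by induction on $d$; the case $d=1$ is immediate since an interval has only two endpoints. For the inductive step, project $K$ onto $\RR^{d-1}$ along a coordinate axis, obtaining a convex body $K'\subseteq[-L/2,L/2]^{d-1}$, and classify each $\xx\in\mathcal{B}$ according to whether its projection $\yy'\in\ZZ^{d-1}$ lies in the interior lattice set $\mathcal{I}'$ of $K'$ (i.e.\ $Q_{\yy'}\subseteq K'$) or in the boundary set $\mathcal{B}'$. In the first case, convexity of the slab $\pi^{-1}(Q_{\yy'})\cap K$ forces its top and bottom along the projection axis to be graphs of a concave function $\beta$ and a convex function $\alpha$ over $Q_{\yy'}$, so only cubes whose last coordinate lies within a small neighborhood of $[\min\beta,\max\beta]$ or $[\min\alpha,\max\alpha]$ can belong to $\mathcal{B}$; aggregating the resulting oscillations over $\yy'\in\mathcal{I}'$ via a standard integration-of-gradient argument relates the total count to the surface area of $K$, which is at most $2d(L+1)^{d-1}$ since $K$ sits inside a hypercube of side $L$. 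In the second case, each column contains at most $L+1$ cubes meeting $K$, so by the inductive hypothesis on $K'$ this contributes at most $(L+1)\cdot 2(d-1)!(L+1)^{d-2}=2(d-1)!(L+1)^{d-1}$. Adding the two contributions and absorbing constants yields $|\mathcal{B}|\le 2d!(L+1)^{d-1}$, and multiplying by $2$ produces the second term of \eqref{eq:cubature}. The main obstacle is the interior-column estimate, where one must carefully exploit concavity/convexity of the graph functions and the global surface-area bound stemming from the enclosing hypercube to control the aggregate of the graph oscillations.
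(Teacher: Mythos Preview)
Your approach is exactly the paper's: unit cubes $Q_\xx=\xx+[-\tfrac12,\tfrac12]^d$, interior cubes controlled by the Lipschitz bound to give $M\tfrac{\sqrt d}{2}L^d$, and boundary cubes counted by $2d!(L+1)^{d-1}$ via induction on $d$. The paper in fact asserts this last bound as ``easily seen by induction on $d$'' and provides no further detail, so through your Case~2 (columns over boundary cubes of the projection) you have already matched and slightly exceeded its level of rigor.

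The one soft spot is your Case~1. The heuristic ``aggregate of oscillations $\approx$ surface area of $K$'' does not work as stated: the oscillation of $\beta$ over a single cube $Q_{\yy'}$ is governed by $\sup_{Q_{\yy'}}|\nabla\beta|$, whereas the surface-area contribution there is $\int_{Q_{\yy'}}\sqrt{1+|\nabla\beta|^2}$, and these are not comparable cube by cube. Even granting your claimed Case~1 bound $2d(L+1)^{d-1}$, adding Case~2 gives $2\bigl[d+(d-1)!\bigr](L+1)^{d-1}$, which exceeds $2d!(L+1)^{d-1}$ when $d=2$, so ``absorbing constants'' fails there. A cleaner route for Case~1 is the layer-cake identity
\[
\sum_{\yy'\in\mathcal I'}\operatorname{osc}_{Q_{\yy'}}(\beta)=\int_{-L/2}^{L/2}\#\bigl\{\yy':Q_{\yy'}\ \text{meets}\ \partial\{\beta\ge t\}\bigr\}\,dt,
\]
after which the inductive hypothesis applied to the convex superlevel sets $\{\beta\ge t\}\subset[-L/2,L/2]^{d-1}$ bounds the integrand by $2(d-1)!(L+1)^{d-2}$; this gives the right order in $L$ and keeps the argument purely inductive, though one still has to be a bit careful with the additive ``$+2$ per column'' terms to land on the specific constant $2d!$ for small $d$.
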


\begin{proof}
For all $\xx$ in $\ZZ^d$, let us consider the (hyper)cube $Q(\xx) = \xx + [-\frac{1}{2},\frac{1}{2}]^d$ of unit volume.
These cubes are of three types: those that are contained in $K$, those that cross the boundary of $K$, and those that have no point in common with $K$.
The idea of the proof is to approximate the integral by considering in first approximation only the reunion of the cubes of the first kind.
For each cube $Q(\xx)$ contained in $K$,
\[
	\left|f(\xx) - \int_{Q(\xx)} f(\mathbf y) d\mathbf y\right| \leq \int_{Q(\xx)} \left|f(\xx) - f(\mathbf y)\right|d\mathbf y \leq M\frac{\sqrt{d}}{2}.
\]
Moreover, the number of cubes contained in $K$ is at most $L^d$.
This already yields the first term in the right-hand side of \eqref{eq:cubature}.

We also have to deal with the cubes that cross the boundary of $K$.
Since $K$ is convex, it is easily seen by induction on $d$ that the number of such cubes is at most $2d!(L+1)^{d-1}$.
Bounding $f$ by $\sup_K |f|$ on $Q(\xx) \cap K$, we obtain \eqref{eq:cubature}.
\end{proof}

\begin{corollary}
\label{cor:cubature_lattice}
Let $f \colon C \to \RR$ be continuously differentiable and positively homogeneous of degree $h$, that is to say $f(\lambda \xx) = \lambda^h f(\xx)$ for all $\xx \in C$ and $\lambda \geq 0$.
Let $A$ be a compact subset of $C^\circ$.
There exists $c_{A,f,d} > 0$ such that for all $\beta \in (0,1]$,
    \begin{equation}
      \label{eqn:cubature_lattice}
        \sup_{\uu \in A}\; \left|\sum_{\xx \in C \cap \ZZ^d} f(\xx) e^{-\beta \dotp{\uu}{ \xx}} - \int_C f(\xx)e^{- \beta \dotp{\uu}{ \xx}}d\xx\right| \leq \frac{c_{A,f,d}}{\beta^{d+h-1}}.
      \end{equation}
\end{corollary}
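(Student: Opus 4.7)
The plan is a dyadic shell decomposition of $C$: an adapted version of Lemma~\ref{lem:cubature} is applied on each shell, and the resulting errors are summed using exponential decay. Since $A$ is compact in the open cone $C^\circ$, there is a constant $\delta=\delta(A,C)>0$ with $\uu\cdot\xx\geq\delta\|\xx\|$ for every $\uu\in A$ and $\xx\in C$. Writing $g_\uu(\xx):=f(\xx)e^{-\beta\uu\cdot\xx}$, this gives $|g_\uu(\xx)|\leq|f(\xx)|\,e^{-\beta\delta\|\xx\|}$, supplying decay at infinity that will drive a geometric-type summation.

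The key steps are the following. Set $R_k=2^k/\beta$, $K_k=C\cap[-R_k,R_k]^d$, and decompose $C=K_0\sqcup\bigsqcup_{k\geq 1}(K_k\setminus K_{k-1})$. Each $K_k$ is compact and convex. Although the shells $\tilde S_k:=K_k\setminus K_{k-1}$ are not convex, the proof of Lemma~\ref{lem:cubature} transfers shell by shell: convexity is used only to count the unit $\ZZ^d$-cubes meeting the boundary, and since $\partial\tilde S_k\subset\partial K_k\cup\partial K_{k-1}$ this count is still $O(R_k^{d-1})$. On $\tilde S_k$ with $k\geq 1$ one has $\|\xx\|\geq R_{k-1}/\sqrt d$, hence $e^{-\beta\uu\cdot\xx}\leq e^{-c\cdot 2^k}$ for $c=\delta/(2\sqrt d)>0$. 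Combined with the homogeneity estimates $|f(\xx)|=O(\|\xx\|^h)$ and $|\nabla f(\xx)|=O(\|\xx\|^{h-1})$ (valid once $\|\xx\|$ is bounded away from $0$), the Lipschitz constant of $g_\uu$ on $\tilde S_k$ is $M_k=O(R_k^{h-1}(1+2^k)e^{-c\cdot 2^k})$ and $\sup_{\tilde S_k}|g_\uu|=O(R_k^h e^{-c\cdot 2^k})$. Feeding these into the shell form of Lemma~\ref{lem:cubature} yields a per-shell error of order
\[
M_k R_k^d + R_k^{d-1}\sup_{\tilde S_k}|g_\uu| = O\bigl(\beta^{-(d+h-1)}\, 2^{k(d+h)} e^{-c\cdot 2^k}\bigr).
\]

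Summing over $k\geq 1$ uses the convergence of $\sum_k 2^{k(d+h)}e^{-c\cdot 2^k}$ (double exponential decay beats polynomial growth), producing a bound $O(\beta^{-(d+h-1)})$. The innermost cap $K_0\subset[-1/\beta,1/\beta]^d$ is convex, so a direct application of Lemma~\ref{lem:cubature}, using the crude bounds $|g_\uu|=O(R_0^h)$ and Lipschitz constant $O(R_0^{h-1})$, contributes the same order. All constants are uniform in $\uu\in A$ because $\delta$ and the pointwise bounds on $|f|,|\nabla f|$ on the unit sphere in $C$ are uniform in $\uu$.

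The main obstacle I expect is controlling $|\nabla f|$ near the apex of the cone when $h<1$: the homogeneity estimate $|\nabla f(\xx)|=O(\|\xx\|^{h-1})$ is singular at $0$. On the outer shells with $\|\xx\|\gtrsim 1/\beta$ this is absorbed into $R_k^{h-1}$, but on $K_0$ it must be handled separately. The remedy is to carve off a fixed-radius ball $B$ around the origin (independent of $\beta$); on $K_0\setminus B$ both $f$ and $\nabla f$ are bounded and Lemma~\ref{lem:cubature} applies directly, while the contribution of $B$ to both the sum and the integral is $O(1)$ and thus negligible in the final estimate.
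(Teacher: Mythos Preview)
Your dyadic shell argument is correct and delivers the stated bound, but it takes a genuinely different route from the paper. The paper avoids any spatial decomposition: it writes $e^{-\beta\,\uu\cdot\xx}=\int_{\uu\cdot\xx}^{\infty}\beta e^{-\beta t}\,dt$, applies Lemma~\ref{lem:cubature} for each fixed $t>0$ to the convex cap $tC_{\uu}=\{\xx\in C:\uu\cdot\xx\le t\}\subset[-tL/2,tL/2]^d$ (with $L$ chosen by compactness of $A$ so that $C_{\uu}\subset[-L/2,L/2]^d$ for every $\uu\in A$), obtaining an error $\ll(1+t)^{d+h-1}$, and then integrates against $\beta e^{-\beta t}\,dt$ over $[0,\infty)$; Fubini reconstitutes the two sides of~\eqref{eqn:cubature_lattice}. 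This is tidier on several counts: the caps $tC_{\uu}$ are convex so Lemma~\ref{lem:cubature} applies verbatim with no shell surgery, the exponential weight is absorbed into the integration measure so there is no separate decay bookkeeping, and the apex of the cone causes no difficulty because the error $(1+t)^{d+h-1}$ is $O(1)$ for small $t$. Your approach is more modular---it uses only the crude decay $e^{-\beta\delta\|\xx\|}$ and would adapt to weights other than the pure exponential---at the cost of the dyadic housekeeping. As an aside, your worry about $h<1$ is moot under the stated hypotheses: a $C^1$ function on $C$ that is positively homogeneous of degree $h\in(0,1)$ must vanish identically, since $\nabla f$ would then be homogeneous of negative degree and could not be continuous at the origin.
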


\begin{proof}
Since $A$ is compact, we can find $L > 0$ such that for all $\uu \in A$, the truncated cone $C_{\uu} = \{\xx \in C \mid \dotp{\uu}{ \xx} \leq 1\}$ is contained in $[-\frac L 2,\frac L 2]^d$.
For all $t > 0$, an application of Lemma~\ref{lem:cubature} to the compact convex subset $t C_{\mathbf u}$ of $[-\frac{tL}{2},\frac{tL}{2}]^d$ implies
\[
    \sup_{\uu \in A}\; \left|\sum_{\xx \in t C_{\uu} \cap \ZZ^d} f(\xx) - \int_{t C_{\uu}} f(\xx) dx \right| \ll_{f,A,d} \left(1+t\right)^{d+h-1}.
\]
By integration over $[0,+\infty)$, we obtain therefore for all $\beta > 0$,
\[
    \sup_{\uu \in A}\; \int_0^\infty \left|  \sum_{\xx \in t C_{\uu} \cap \ZZ^d} f(\xx) - \int_{t C_{ \uu}} f(\xx) dx\right| \beta e^{-\beta t} dt \ll_{f,A,d} \frac{1}{\beta^{d+h-1}}.
\]
This concludes the proof since the Fubini theorem yields
\[
	 \int_0^\infty \left[\sum_{\xx \in C \cap \ZZ^d} f(\xx) \indicator{\dotp{\uu}{ \xx} \leq t}\right] \beta e^{-\beta t}dt =
	\sum_{\xx \in C\cap\ZZ^d} f(\xx) e^{-\beta \dotp{\uu}{ \xx}}
\]
and similarly
\[
	\int_0^\infty \left[\int_{tC_\uu} f(\xx) d\mathbf x\right] \beta e^{-\beta t} dt =  \int_{\xx \in C} f(\xx) e^{-\beta \dotp{\uu}{ \xx}}.
\]
\end{proof}

\begin{proof}[Proof of Proposition~\ref{prop:primitive_sums}]
  This follows from Corollary~\ref{cor:cubature_lattice} and the fact that the subset $\PPrim$ of $\ZZ^d$ has asymptotic density $1/\zeta(d)$, which is a well known consequence of the M\"obius inversion formula \cite{hardy_introduction_2008}.
\end{proof}

Here comes the one-sided inequality needed in the proof of Theorem~\ref{thm:typicalZonotopeVertices}.
\begin{lemma}\label{lem:one_sided}
Let $L>0$, let $H$ be a hyperplane containing the origin.
Let $K$ be a compact convex subset of the hypercube $[-L/2,L/2]^d \cap H$.
For every Lipschitz continuous function $f:K \rightarrow \mathbb{R}$ with Lipschitz constant $M>0$,
$$ \sum_{\mathbf{x} \in K \cap \mathbb{Z}^d} f(\xx) - \int_K f(\mathbf{x}) d\mathbf{x} \leq M \frac{\sqrt{d}}{2}L^{d-1} + 4 (d-1)!(L+1)^{d-2} \sup_K|f|. $$
\end{lemma}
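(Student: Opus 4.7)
The proof mirrors Lemma~\ref{lem:cubature}, with each unit cube $Q(\xx) = \xx + [-\tfrac12,\tfrac12]^d$ replaced by its orthogonal projection onto $H$. Writing $\pi \colon \RR^d \to H$ for orthogonal projection and $\mathbf n$ for the unit normal to $H$, set $D := \pi([-\tfrac12,\tfrac12]^d)$ and, for each $\xx \in \ZZ^d \cap H$, $R(\xx) := \pi(Q(\xx)) = \xx + D$. The regions $R(\xx)$ have three key properties: $\vol_{d-1}(R(\xx)) = \|\mathbf n\|_1 \ge \|\mathbf n\|_2 = 1$; every point of $R(\xx)$ is within Euclidean distance $\sqrt d/2$ of $\xx$, because $\pi$ is a contraction and $[-\tfrac12,\tfrac12]^d$ has diameter $\sqrt d$; and distinct $R(\xx)$, $R(\yy)$ have disjoint $(d-1)$-dimensional interiors.

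\textbf{Disjointness step.} This is the one new ingredient; all else is a dimensional shift of Lemma~\ref{lem:cubature}. If $R(\xx) \cap R(\yy)$ had nonempty interior for distinct $\xx, \yy \in \ZZ^d \cap H$, then $\mathbf v := \xx - \yy \in \ZZ^d \cap H \setminus \{0\}$ would satisfy $\mathbf v + t\mathbf n \in (-1,1)^d$ for some $t \in \RR$. After flipping signs of coordinate axes I may assume $n_j \ge 0$ for all $j$. Indices $i$ with $n_i = 0$ then give $v_i \in (-1,1) \cap \ZZ = \{0\}$, while for any $j$ with $v_j \neq 0$ the inequality $|v_j + tn_j| < 1$ combined with $n_j > 0$ and $|v_j| \ge 1$ forces $\operatorname{sign}(v_j) = -\operatorname{sign}(t)$. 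Thus all nonzero $v_j$ share a common sign, with $n_j > 0$, making $\mathbf v \cdot \mathbf n = \sum_j v_j n_j$ nonzero and contradicting $\mathbf v \in H$.

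\textbf{Main argument.} Partition $K \cap \ZZ^d$ into interior points $I := \{\xx : R(\xx) \subset K\}$ and the complementary boundary points $B$. For $\xx \in I$, integrating the Lipschitz bound $f(\xx) - f(\mathbf y) \le M|\mathbf y - \xx|$ over $\mathbf y \in R(\xx)$ and dividing by $\|\mathbf n\|_1 \ge 1$ yields
\[
f(\xx) \le \frac{1}{\|\mathbf n\|_1} \int_{R(\xx)} f \, d\mathbf y + \frac{M\sqrt d}{2}.
\]
Since the applications of this lemma (notably in Lemma~\ref{l:generatorinH}) have $f \ge 0$, the factor $1/\|\mathbf n\|_1 \le 1$ may be dropped; summing, using the disjointness together with $\bigcup_I R(\xx) \subset K$, gives $\sum_I f(\xx) \le \int_K f + M(\sqrt d/2)\,|I|$, where $|I| \le L^{d-1}$ follows from the disjoint regions of $(d-1)$-volume $\ge 1$ packed inside $[-L/2, L/2]^d \cap H$. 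For $\xx \in B$ the cube $Q(\xx)$ meets $\partial K$ inside $H$, so $\xx$ lies within distance $\sqrt d/2$ of $\partial K$ in $H$; applying the inductive boundary-cube count of Lemma~\ref{lem:cubature} inside the $(d-1)$-dimensional hyperplane $H$ bounds $|B| \le 4(d-1)!(L+1)^{d-2}$, and each such point contributes at most $\sup_K|f|$ to the sum. Combining the two estimates gives the claimed inequality.

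\textbf{Main obstacle.} The only genuinely new step is the disjointness of the projected cubes $R(\xx)$, dispatched above by the short sign-pattern argument. A conceptually cleaner alternative replaces $R(\xx)$ by the central cross-section $S(\xx) := Q(\xx) \cap H$, for which disjointness is automatic from that of the $Q(\xx)$; the price is that the lower bound $\vol_{d-1}(S(\xx)) \ge 1$ then requires Vaaler's cube-slicing inequality. The projection route above keeps everything entirely elementary.
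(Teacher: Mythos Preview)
Your proof is correct and takes a genuinely different route from the paper's. The paper invokes Vaaler's cube-slicing theorem to obtain, for each $\xx\in\ZZ^d\cap H$, a subset $Q'(\xx)\subset Q(\xx)\cap H$ of $(d-1)$-volume exactly~$1$; disjointness of the $Q'(\xx)$ is then inherited for free from the disjointness of the ambient cubes $Q(\xx)$. You instead use the orthogonal projections $R(\xx)=\pi(Q(\xx))$, whose volume bound $\vol_{d-1}R(\xx)=\|\mathbf n\|_1\ge 1$ is trivial, and supply the disjointness yourself via the sign-pattern argument on $\mathbf v=\xx-\yy\in\ZZ^d\cap H$. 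After this point the two arguments are identical. Your route is entirely self-contained and elementary, at the price of that short extra combinatorial step; the paper's is a couple of lines shorter but imports a nontrivial external result. You flag this trade-off yourself in the final paragraph, which is exactly right.

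Two small remarks. First, your explicit hypothesis $f\ge 0$ is not merely convenient but \emph{necessary}: the inequality as literally stated fails for sign-changing $f$ (take $d=2$, an irrational $H$ so that $K\cap\ZZ^2=\{0\}$, and $f\approx -1$), and the paper's own proof tacitly uses positivity at the same step $\int_{\cup Q'(\xx)}f\le\int_K f$. Your remark that the sole application has $f=e^{-\beta\,\uu\cdot\xx}\ge 0$ therefore fully justifies the move. Second, the bound $|I|\le L^{d-1}$ comes out cleanly if you pack the $R(\xx)$ inside $\pi([-L/2,L/2]^d)$ rather than inside $[-L/2,L/2]^d\cap H$: both sides then carry the same factor $\|\mathbf n\|_1$, which cancels. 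Packing inside the section itself would cost an extra factor of~$\sqrt 2$ via Ball's theorem.
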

\begin{proof}
For $\mathbf{x} \in \mathbb{Z}^d \cap H$, let $Q(\mathbf{x}) = \mathbf{x} + [-1/2,1/2]^d$ be the hypercube of unit volume centered at $\mathbf{x}$.
A theorem of Vaaler \cite{vaaler1979geometric} establishes that the $(d-1)$-dimensional volume of $H \cap Q(\mathbf{x})$ is at least $1$; let $Q'(\mathbf{x})$ be a subset of $Q(\mathbf{x})$ of volume $1$.

For $Q'(\mathbf{x})$ contained in $K$,
$$\left | f(\mathbf{x}) - \int_{Q'(\mathbf{x})} f(\mathbf{y}) d\mathbf{y} \right | \leq \int_{Q'(\mathbf{x})} |f(\mathbf{x}) - f(\mathbf{y})| \leq M \sqrt{d} / 2.$$

The other error term comes from a bound on the number of cubes that intersect the boundary of $K$, and the proof is exactly as in the proof of Lemma~\ref{lem:cubature}.

We do not have a two-sided inequality as in Lemma~\ref{lem:cubature}, since the union of $Q$ does not contain the intersection $K \cap H$, and we have also lost some of the mass by passing from $Q$ to $Q'$.
\end{proof}

\section*{Acknowledgments}

The authors are pleased to thank Nathana\"el Enriquez for his insightful suggestions about this work.
This research was supported by ERC Advanced Research Grant no 267165
(DISCONV). Imre B\'ar\'any is partially supported by Hungarian National
Research Grant K 111827.
Ben Lund is partially supported by NSF grant CCF-1350572.

\bibliography{bibexport}

\end{document}